\documentclass[11pt,reqno]{amsart} 
\usepackage[left=2.9cm,right=2.9cm]{geometry}
\usepackage{amsfonts}
\usepackage{amsmath}
\usepackage{amssymb}
\usepackage{mathtools}
\usepackage{amsthm}
\usepackage{pgfplots}
\pgfplotsset{compat=newest}
\usepackage[shortlabels]{enumitem}
\usepackage{tikz}   
\usepackage{tikz-cd}  
\usepackage{changepage}
\usetikzlibrary{arrows}
\usepackage{bm}
\usepackage{graphicx}
\usepackage{cases}
\usepackage{appendix}
\usepackage{esint}
\usepackage{hyperref}
\usepackage{mathrsfs}
\usepackage{mathrsfs}
\numberwithin{equation}{section}

\theoremstyle{definition}
\newtheorem{thm}{Theorem}

\newtheorem{defn}[thm]{Definition}
\newtheorem{lem}[thm]{Lemma}
\newtheorem{prop}[thm]{Proposition}

\numberwithin{thm}{section}

\newcommand{\R}{\mathbb{R}}  
\newcommand{\Sp}{\mathbb{S}}
\newcommand{\p}{\partial}  

\newcommand{\dif}{\textup{d}} 

\newcommand{\Hau}{\mathcal{H}} 

\newcommand{\sing}{\operatorname{sing}}

\usepackage{xcolor}
\usepackage{listings}
\begin{document}
\title[Regularity of stable harmonic maps to spheres]{Optimal regularity of stable harmonic maps to spheres}
\author{Xuanyu Li}
\address{Department of Mathematics, Cornell University, Ithaca, NY 14853, USA}
\email{xl896@cornell.edu}
\begin{abstract}
    In this paper, we show that the codimension of the singular set of a stable stationary harmonic map to a round $k$-sphere is at least $k+1$ when $k$ is between 3 and 6, and is at least 7 when $k$ is at least 7. The result is sharp in the sense that there exist energy minimizing 0-homogeneous maps in the aforementioned critical dimensions. We also establish non-trivial index bounds for the non-constant harmonic maps from $n$-spheres to $k$-spheres, provided that $n$ is less than $k$.
\end{abstract}
\maketitle

\section{Introduction}\label{s: intro}
\subsection{Main results}
\textbf{Harmonic map}s are critical points of the Dirichlet energy for maps between Riemannian manifolds. A smooth map $u$ from a Riemannian manifold $M$ to the round sphere $\Sp^k$ is harmonic if and only if it satisfies the equation
$$\Delta_Mu^i+\vert\nabla_Mu\vert^2u^i=0\text{ for all }i=1,\dots,k+1.$$
Throughout this paper, we identify $\Sp^k$ with the subset $\lbrace p\in\R^{k+1}:\vert p\vert=1\rbrace$ in $\R^{k+1}$. Accordingly, we write $u=(u^1,\dots,u^{k+1})$ and the harmonic map equation above is interpreted componentwise. A map in $W^{1,2}(M,\Sp^k)$ is called a \textbf{weak} harmonic map if it satisfies this equation in the distributional sense.

Weak harmonic maps need not be continuous. For general target manifolds, especially manifolds with positive curvature, singularities may exist even under substantially stronger variational hypotheses, such as energy minimizing or stable stationary. A fundamental example is the radial projection $x\mapsto x/\vert x\vert,\R^{k+1}\rightarrow\Sp^k$, which minimizes the Dirichlet energy among maps with the same boundary trace for all $k\geqslant2$; as shown by W. J\"ager and H. Kaul \cite{JagerKaulMinimizing}, H. Brezis, J. Coron and E. Lieb \cite{BrezisCoronLieb}, F. Lin \cite{LinMinimizing} and J. Coron, R. Gulliver \cite{CoronGulliver}. Without stationarity, a weak harmonic map to sphere can even be everywhere discontinuous as shown by an example constructed by T. Riviere \cite{RiviereDiscontinuous}.

The regularity theory for stable stationary harmonic maps to spheres was developed by R. Schoen and K. Uhlenbeck \cite{SchoenUhlenbeckSphere} and F. Lin and C. Wang \cite{LinWangSphere}. Their work establishes strong partial regularity results, but leaves open some dimensions. More precisely, for map from a six-dimensional domain into $\Sp^k$ with $6\leqslant k\leqslant 9$, the existing theory does not determine whether singularities can occur: it neither implies full regularity nor provides singular examples. Thus, the unresolved issue is qualitative rather than merely quantitative--it remained unclear whether the gap reflected genuine singular behavior or a limitation of the available estimates.

Resolving this borderline case is important both for the intrinsic regularity theory and for geometric applications of harmonic maps. For example, the regularity theory above is an essential input in the partial regularity of harmonic maps constructed by M. Karpukhin and D. Stern \cite{MikhailStern}. Sharper regularity for stable stationary maps therefore yields correspondingly stronger structural information about these variationally constructed maps. 

In this paper, we fill the gaps in the previous works by establishing the following sharp result. In particular, we improve the singular set dimension bound in the work of F. Lin and C. Wang for $6\leqslant k\leqslant 9$.
\begin{thm}\label{thm: main theorem 1}
    Let $M$ be a Riemannian $n$-manifold and $u: M\rightarrow \Sp^k$ be a stable stationary harmonic map. Then $u$ is smooth away from a relatively closed set $\sing u$, whose Hausdorff dimension is bounded by
    \begin{align*}
        \dim_{\Hau} \sing u\leqslant
        \begin{cases}
            n-k-1,&\text{ if }3\leqslant k\leqslant6;\\
            n-7,&\text{ if }k\geqslant 7.
        \end{cases}
    \end{align*}
\end{thm}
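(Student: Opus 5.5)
The proof goes through the standard Federer dimension-reduction scheme, which reduces the theorem to a Liouville-type statement for stable tangent maps. By the partial regularity theory of Schoen--Uhlenbeck and Lin--Wang, $u$ is smooth away from a closed set $\sing u$ with $\dim_{\Hau}\sing u\leqslant n-3$. For stable stationary maps into spheres, blow-ups at any point converge strongly in $W^{1,2}$, so tangent maps exist. Moreover, stability and stationarity pass to the limit: strong convergence lets us pass to the limit in the second variation $\int |\nabla \phi|^2-|\nabla u|^2|\phi|^2\geqslant 0$ for $\phi\perp u$. Every tangent map is therefore a $0$-homogeneous, stable, stationary harmonic map $\varphi(x)=\omega(x/|x|)$, where $\omega:\Sp^{m-1}\to\Sp^k$ is harmonic. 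Federer's reduction then shows that if no nonconstant stable $0$-homogeneous map $\R^m\to\Sp^k$ with $\sing\varphi=\{0\}$ exists for $m\leqslant d$, then $\dim_{\Hau}\sing u\leqslant n-d-1$. So it suffices to prove the following: for $m\leqslant k$ when $3\leqslant k\leqslant 6$, and for $m\leqslant 6$ when $k\geqslant 7$, every stable $0$-homogeneous harmonic map $\R^m\to\Sp^k$ that is smooth away from the origin is constant.

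To prove this Liouville statement, I would combine stability in the radial direction with index information for the link $\omega:\Sp^{m-1}\to\Sp^k$. Test the stability inequality with $\phi=f(r)\,V(\theta)$, where $V$ is a section of $\omega^*T\Sp^k$. Separating variables turns the condition into a Hardy-type inequality: with $\lambda(V)=\int_{\Sp^{m-1}}(|\nabla V|^2-|\nabla\omega|^2|V|^2)$, one needs
\[
\lambda(V)\geqslant -\tfrac{(m-2)^2}{4}\int|V|^2
\]
by optimality of the constant $(m-2)^2/4$ in Hardy's inequality. It therefore remains to produce, for nonconstant $\omega$ in the stated ranges, a test field $V$ violating this bound. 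The natural choices are the conformal fields $V_a=a-\langle a,\omega\rangle\omega$ for $a\in\R^{k+1}$, as in Schoen--Uhlenbeck. Averaging over $a$ gives $\sum_a\lambda(V_a)=\int(2-k)|\nabla\omega|^2$ against $\sum_a\int|V_a|^2=k\,|\Sp^{m-1}|$. This already yields nonexistence for $m$ up to roughly the Schoen--Uhlenbeck range.

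To reach the sharp range $m\leqslant\min(k,6)$, I would use refined fields adapted to the link. One option is $V_a$ weighted by functions of the energy density or by $|\nabla\omega|^\alpha$. Another is to combine $V_a$ with fields $d\omega(X)$, where $X$ is a conformal vector field on $\Sp^{m-1}$. The weight would then be optimized, in the spirit of the Simons-type computations for minimal cones. The key quantitative input is an index bound for nonconstant harmonic maps $\Sp^{m-1}\to\Sp^k$ with $m-1<k$, which the abstract announces. Specifically, we need enough negative directions with eigenvalue below $-(m-2)^2/4$. The Bochner formula for $\omega$ together with $|\nabla\omega|^2\geqslant$ (a constant depending on $m-1$), a gap fact for harmonic maps of spheres, should close the estimate.

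The main obstacle is this last step: choosing the test fields and weights so that the inequality fails exactly up to $m=k$ for $k\leqslant6$ and up to $m=6$ for $k\geqslant7$. The sharpness examples, namely the minimizing equators $x/|x|$ in dimension $k+1$ and the known $7$-dimensional examples, show that no slack is available. I would first try the weighted conformal fields $|\nabla\omega|^{\alpha}V_a$ and optimize over $\alpha$, using the Bochner identity to control the gradient terms of the weight.
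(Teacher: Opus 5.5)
Your reduction step is correct and matches the paper. Since $\Sp^k$ with $k\geqslant3$ carries no stable harmonic $2$-spheres, Hsu's analysis gives strong convergence of blow-ups, and Federer's reduction leaves exactly the Liouville statement for regular $0$-homogeneous stable maps $\R^m\to\Sp^k$ with $m\leqslant\min(k,6)$.

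\textbf{The gap.} That Liouville statement is the whole content of the theorem, and you do not prove it. What you call ``the main obstacle'' is left open, and the routes you sketch would not close it.

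\emph{The averaged conformal fields need a matching lower bound.} Averaging over conformal fields only bounds the energy from above. The available lower bound is Poincar\'e's $E(\omega)\geqslant(m-1)(1-\vert\bar\omega\vert^2)$ in normalized measure, and it carries the factor $1-\vert\bar\omega\vert^2$. The paper matches this by testing with $\hat e\circ\omega-(e\wedge\bar\omega)\omega$. The Killing correction leaves $I_\omega$ unchanged but shrinks the $L^2$ norm, so the isotropic bound becomes $E(\omega)\leqslant\frac{(m-2)^2k}{4(k-2)}(1-\vert\bar\omega\vert^2)$. This settles $m\leqslant5$. For $m=6$ it would need $4k/(k-2)<5$, i.e.\ $k\geqslant11$, so the cases $m=6$, $6\leqslant k\leqslant10$ are exactly the hard ones.

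\emph{Your proposed next steps do not reach those cases.} Weighting $V_a$ by $\vert\nabla\omega\vert^{\alpha}$, using Bochner/Kato, and summing over all $a$ is essentially the Schoen--Uhlenbeck/Lin--Wang route. That is precisely what left $m=6$, $6\leqslant k\leqslant9$ open: summing over all ambient directions is isotropic and cannot see that the extremal maps are equatorial. Your other two inputs also fail.
\begin{itemize}
\item A pointwise lower bound on $\vert\nabla\omega\vert^2$ is false in general, e.g.\ at the branch points of $z\mapsto z^2$ on $\Sp^2$.
\item The index bound from the abstract cannot serve as input. In the paper it is a by-product of the same method, and it concerns links of dimension at least $6$, so $m\geqslant7$. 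It also produces an extra eigenvalue below $-(m-3)$, not below the stability threshold $-(m-2)^2/4$. Counting negative directions is irrelevant here: the Xin--El Soufi fields $\nabla_{\hat v}\omega$ already give $m$ of them at $-(m-3)\geqslant-(m-2)^2/4$ without contradicting stability.
\end{itemize}

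\textbf{What the paper does instead} is anisotropic. Expand $\omega=\bar\omega+\mathscr{H}x+\cdots$ in spherical harmonics on $\Sp^{m-1}$, where $\mathscr{H}$ is the $(k+1)\times m$ first harmonic coefficient matrix. Since $m\leqslant k$, $(\operatorname{Im}\mathscr{H})^{\perp}\neq0$. For $e$ in this complement, $\langle\omega,e\rangle$ has no degree-one component, so its Poincar\'e constant improves from $m-1$ to $2m$.

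Test stability only with the Killing-corrected conformal fields for $e$ in a subspace $L\subset(\operatorname{Im}\mathscr{H})^{\perp}$, and use $\int\vert\nabla\omega\vert^2\langle\omega,e\rangle^2=\int\vert\nabla\langle\omega,e\rangle\vert^2$. With $\dim L=2$ this gives $E(\omega)\geqslant\frac{52}{9}(1-\vert\bar\omega\vert^2)$. That beats the isotropic upper bound $\frac{28}{5}(1-\vert\bar\omega\vert^2)$ when $m=6$, $k\geqslant7$.

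For $(m,k)=(6,6)$ only $\dim L\geqslant1$ is guaranteed, which yields $132/25<6$ and no contradiction. The paper then uses a further idea: it bounds the nuclear norm $\sum_j\sqrt{\lambda_j(\mathscr{H}\mathscr{H}^{\mathrm{T}})}$ from both sides.
\begin{itemize}
\item From below, via partial eigenvalue-sum estimates and weak supermajorization.
\item From above, via positive semidefiniteness of the Gram matrix of $I_\omega+4\langle\cdot,\cdot\rangle_{L^2}$ on $\{\nabla_{\hat e_\alpha}\omega\}\cup\{\hat e_j\circ\omega-(e_j\wedge\bar\omega)\omega\}$. Its off-diagonal block is $\tfrac56\mathscr{H}$ because the coordinate functions are first eigenfunctions.
\end{itemize}
The two bounds are incompatible with $E(\omega)\geqslant\frac{132}{25}(1-\vert\bar\omega\vert^2)$, giving the contradiction.

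Your idea of mixing $V_a$ with $d\omega(X)$ points toward this last step. But without $\mathscr{H}$ and the improved Poincar\'e inequality on $(\operatorname{Im}\mathscr{H})^{\perp}$, nothing in your proposal separates the range $m\leqslant\min(k,6)$ from the sharp singular examples.
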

We refer readers to Section \ref{s: pre} for the precise definition of stability and stationarity of a harmonic map.

For $3\leqslant k\leqslant 6$, the singular model of the critical dimensions in Theorem \ref{thm: main theorem 1} is given by $x\mapsto x/\vert x\vert, \R^{k+1}\rightarrow \Sp^k$ while the singular map $x\mapsto (x/\vert x\vert,0,\dots,0),\R^7\rightarrow\Sp^k$ is shown to be minimizing when $k\geqslant7$ by R. Schoen and K. Uhlenbeck in \cite{SchoenUhlenbeckSphere}. Given the examples here, the regularity result above is sharp.

When $k\geqslant 3$, it is well-known that $\Sp^k$ does not support any stable harmonic 2-sphere. Hence, the analysis by D. Hsu \cite{HsuStableharmonicmap} (see also D. Hsu and J. Li\cite{HsuLiregularity}) and the dimension reduction technique by H. Federer \cite{Federer} readily apply (see also F. J. Almgren \cite{Almgren}), which reduces the proof of Theorem \ref{thm: main theorem 1} to the following result.
\begin{thm}\label{thm: main theorem 2}
    Let $u: \R^{n+1}\rightarrow\Sp^k$ be a regular 0-homogeneous stable stationary harmonic map. Suppose 
    \begin{align*}
        n+1\leqslant\begin{cases}
            k,&\text{ if }3\leqslant k\leqslant 6;\\
            6,&\text{ if }k\geqslant7,
        \end{cases}
    \end{align*}
    then $u$ must be a constant.
\end{thm}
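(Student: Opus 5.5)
A regular $0$-homogeneous map $u:\R^{n+1}\to\Sp^k$ is $u(x)=\phi(x/|x|)$ with $\phi:\Sp^n\to\Sp^k$ a smooth harmonic map. Stability of $u$ on $\R^{n+1}$ says that
$$\int_{\R^{n+1}}|\nabla V|^2-|\nabla u|^2|V|^2\geqslant0$$
for all compactly supported sections $V$ of $u^*T\Sp^k$, that is, $V\cdot u=0$. The plan is to plug in test fields of the form $V(x)=\eta(|x|)\,W(x/|x|)$, with $W$ a section of $\phi^*T\Sp^k$, and separate variables. With $r=|x|$ we have $|\nabla u|^2=r^{-2}|\nabla\phi|^2$ and $|\nabla V|^2=\eta'^2|W|^2+\eta^2r^{-2}|\nabla_{\Sp^n}W|^2$. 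This gives
$$\int_0^\infty\eta'^2r^{n}\,\dif r\int_{\Sp^n}|W|^2+\int_0^\infty\eta^2r^{n-2}\,\dif r\int_{\Sp^n}\bigl(|\nabla W|^2-|\nabla\phi|^2|W|^2\bigr)\geqslant0 .$$
Optimizing over $\eta$ with the sharp Hardy constant $\inf\int\eta'^2r^n/\int\eta^2r^{n-2}=\frac{(n-1)^2}{4}$, stability of $u$ becomes the spectral condition
$$\int_{\Sp^n}|\nabla W|^2-|\nabla\phi|^2|W|^2\geqslant-\frac{(n-1)^2}{4}\int_{\Sp^n}|W|^2\quad\text{for all }W. \tag{$*$}$$
The goal is to show that a nonconstant harmonic $\phi:\Sp^n\to\Sp^k$ violates $(*)$ in the stated range of $n$ and $k$.

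The test sections I would use are the standard ones for sphere targets: $W_a=a-(a\cdot\phi)\phi$ for $a\in\R^{k+1}$, and also conformal-type fields on the domain side, such as $W=\dif\phi(\nabla^{\Sp^n}x_j)$ with $x_j$ the coordinate functions of $\Sp^n\subset\R^{n+1}$. For $W_a$, summing over an orthonormal basis of $a$'s and using the harmonic map equation yields the classical identities
$$\sum_a|W_a|^2=k,\qquad \sum_a|\nabla W_a|^2=\sum_a|\nabla(a\cdot\phi)|^2=|\nabla\phi|^2 .$$
The averaged second variation is then $\int(1-k)|\nabla\phi|^2$. Hence $(*)$ fails as soon as
$$(k-1)\int_{\Sp^n}|\nabla\phi|^2>\frac{(n-1)^2}{4}\,k\,|\Sp^n| .$$
This requires a lower bound for the energy density of a nonconstant harmonic map $\Sp^n\to\Sp^k$, and I would get one in two ways.

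First, I would use the domain conformal fields. Combining $W_a$ with $x_j$-weights, via $V=\sum(\dots)$ built from $a-(a\cdot\phi)\phi$ multiplied by $x_j$, gives the Schoen--Uhlenbeck/Lin--Wang type quadratic form in $(n,k)$. The extra weight introduces the Laplacian eigenvalue $n$ of $x_j$ and couples with the Bochner formula $\frac12\Delta|\nabla\phi|^2=|\nabla^2\phi|^2+(n-1)|\nabla\phi|^2-(\text{target curvature terms})$. Second, I would sharpen the Hardy step: instead of discarding the radial optimization, I would keep $\eta=r^{-(n-1)/2}$ truncated. The new ingredient I expect to be needed beyond the earlier works is a refined lower bound on $\int|\nabla\phi|^2$, or on the index. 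Concretely, I would bound $|\nabla\phi|^2$ from below pointwise on average by a Kato/Bochner inequality adapted to $\dif\phi$ having rank at most $\min(n,k)$. This is where the condition $n<k$ (i.e. $n+1\leqslant k$) enters, since then $\phi$ is not a submersion and the improved Kato constant applies. This matches the abstract's index bounds for $\Sp^n\to\Sp^k$ with $n<k$.

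The main obstacle is exactly this last step: making the numerics close up to $n+1=k$ for $3\leqslant k\leqslant 6$, and up to $n+1=6$ for $k\geqslant7$, which is where the previous estimates stopped. I would first try the family of test fields $W=\bigl(a-(a\cdot\phi)\phi\bigr)f$ with $f$ ranging over first eigenfunctions and constants, optimizing a one-parameter combination. I would verify the resulting inequality case by case for the finitely many borderline pairs $(n,k)$ with $k\leqslant 9$. For $k\geqslant 10$ I would use monotonicity in $k$.
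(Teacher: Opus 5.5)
Your reduction to $(*)$ is correct; it is Proposition~\ref{prop: stable inequality}. However, the proposal stops exactly where the proof has to happen, and the route you sketch cannot reach the new cases.

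\textbf{A computational slip.} Since $\nabla W_a=-\nabla(a\cdot\phi)\,\phi-(a\cdot\phi)\nabla\phi$, you get $\sum_a|\nabla W_a|^2=2|\nabla\phi|^2$; you dropped the term $(a\cdot\phi)^2|\nabla\phi|^2$. So the averaged second variation is $(2-k)\int|\nabla\phi|^2$, and your threshold should read $(k-2)\int|\nabla\phi|^2>\frac{(n-1)^2}{4}k|\Sp^n|$.

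\textbf{The real gap.} Averaging over an orthonormal basis of $a$'s is isotropic. It is provably too weak for $n=5$, $6\leqslant k\leqslant 9$, whatever Bochner, Kato or energy-gap input you add afterwards. Take the equatorial map $\phi:\Sp^5\to\Sp^5\subset\Sp^k$, so $|\nabla\phi|^2\equiv5$. For every scalar weight $f$ (in particular $f\in\{1,x_j\}$, as you propose), using $\sum_a|W_a|^2\equiv k$ and $k\leqslant10$,
\[
\sum_a I_\phi(fW_a,fW_a)=\int_{\Sp^5}\bigl(k|\nabla f|^2-5(k-2)f^2\bigr)\dif\sigma\geqslant-\frac{5(k-2)}{k}\sum_a\int_{\Sp^5}|fW_a|^2\dif\sigma\geqslant-4\sum_a\int_{\Sp^5}|fW_a|^2\dif\sigma .
\]
So these averaged families never violate $(*)$ for this nonconstant harmonic map. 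The energy lower bound your plan needs, $E(\phi)>4k/(k-2)\geqslant5$, is false here, since $E(\phi)=5$. The map's instability shows up only in the single distinguished direction $e_{k+1}$: taking $W=e_{k+1}$ gives $I_\phi(W,W)=-5<-4$. ``Sharpening the Hardy step'' gains nothing either, because $(n-1)^2/4$ is sharp and $(*)$ is already equivalent to stability of the cone.

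\textbf{What the paper uses instead.} The missing ideas all keep directional information.
\begin{enumerate}
\item[(i)] Replace $\hat e_j\circ u$ by $\hat e_j\circ u-(e_j\wedge\bar u)u$. By Proposition~\ref{prop: infinitesimal isometry} the Killing correction leaves $I_u$ unchanged but lowers the $L^2$ norm. This gives Proposition~\ref{prop: control wrt L}, including the relative bound $(k-2)E(u)\leqslant\frac{(n-1)^2k}{4}(1-|\bar u|^2)$. That bound can be played against the Poincar\'e bound $E(u)\geqslant n(1-|\bar u|^2)$; an absolute energy gap is not what is needed.
\item[(ii)] The hypothesis $n<k$ enters through the first harmonic coefficient matrix $\mathscr H_u$, not through a Kato constant. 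Since $\operatorname{rank}\mathscr H_u\leqslant n+1<k+1$, one can take $L\subset(\operatorname{Im}\mathscr H_u)^\perp$. Then $\Pi_Lu$ has no first-order spherical harmonic part, so its Poincar\'e constant improves to $2(n+1)$. Combining this with Proposition~\ref{prop: control wrt L} for that single $L$ gives a lower bound on $E(u)/(1-|\bar u|^2)$ that beats the upper bound. Here $d=1$ for $2\leqslant n\leqslant4$, and $d=2$ for $n=5$, $k\geqslant7$.
\item[(iii)] For $(n,k)=(5,6)$ even this fails ($132/25<6$), and no case-by-case numerics will rescue it. The paper compares the nuclear norm of $\mathscr H_u$ from two sides. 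The lower bound $\frac{600}{143}E(u)$ comes from partial sums of the eigenvalues of $\mathscr H_u\mathscr H_u^{\text{T}}$ and weak supermajorization. The upper bound comes from positive semidefiniteness of the Gram matrix of $I_u+4\langle\cdot,\cdot\rangle_{L^2}$ on $\{\nabla_{\hat e_\alpha}u\}\cup\{Y_j\}$, whose off-diagonal block is $\frac56\mathscr H_u$.
\end{enumerate}
None of (i)--(iii) appears in your plan.
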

Here we say a map $u$ defined on $\R^{n+1}$ is a regular 0-homogeneous map, provided that $u\in C^{\infty}(\R^n\setminus\{0\})$ and $u(\lambda x)=u(x)$ for all $\lambda>0$.

Finally, when a nonconstant stable tangent map exists, we also deduce a non-trivial index lower bound.

\begin{thm}\label{thm: main theorem 3}
    Let $u: \Sp^n\rightarrow\Sp^k$ be a non-constant harmonic map. Suppose $6\leqslant n\leqslant k-1$, then $u$ has index at least $n+2$ with respect to the energy functional.
\end{thm}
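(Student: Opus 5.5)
The strategy is to test the second variation of $E$ at $u$ on an explicit family of variation fields and show that the quadratic form is negative on a subspace of dimension $n+2$. The natural candidates are the conformal fields on the target. For each $a\in\R^{k+1}$, set $V_a=a-\langle a,u\rangle u$, the tangential projection of the constant vector $a$ along $u$. For maps into $\Sp^k$, the second variation is
\[
Q(V)=\int_{\Sp^n}\vert\nabla V\vert^2-\vert\nabla u\vert^2\vert V\vert^2 .
\]
A direct computation using the harmonic map equation gives the classical identity
\[
\sum_{i=1}^{k+1}Q(V_{e_i})=\int_{\Sp^n}(2-k)\vert\nabla u\vert^2,
\]
which is negative for $k\geqslant3$. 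This averaged statement only says the form is negative on average over the $(k+1)$-dimensional space of conformal fields. To get an index bound I would compute the full quadratic form $a\mapsto Q(V_a)=\int\vert\nabla u\vert^2\bigl(\vert a\vert^2-2\langle a,u\rangle^2\bigr)-\ldots$ as a symmetric matrix $A$ on $\R^{k+1}$; its negative eigenspace directly gives part of the index.

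The domain sphere supplies further test fields. I would also use variations of the form $V=\varphi\, V_a$ or $V=\nabla u(X)$, with $X$ a conformal field of $\Sp^n$ (gradients of the linear functions $x_j$ restricted to $\Sp^n$). These give $n+1$ fields, and harmonicity together with stationarity yields a trace identity for them as well. I expect that
\[
\sum_j Q\bigl(du(\nabla x_j)\bigr)=(2-n)\int\vert\nabla u\vert^2+\text{curvature terms},
\]
which is negative when $n\geqslant3$ after accounting for the Ricci term $(n-1)$. Mixing the two families, the plan is to work on the combined space $W=\{V_a\}+\{du(\nabla x_j)\}$ of dimension up to $(k+1)+(n+1)$. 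On this space I would show that $Q$ restricted to $W$ has trace bounded above by a negative multiple of $\int\vert\nabla u\vert^2$, and that each eigenvalue of the restricted form is bounded above by something controlled. A trace-versus-largest-eigenvalue argument then produces many negative directions: if $Q|_W$ has $N$-dimensional domain, trace $\leqslant -c$, and every eigenvalue is at most $c/m$, then at least $m$ eigenvalues are negative.

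The hypothesis $6\leqslant n\leqslant k-1$ should enter exactly in balancing these estimates. The bound $n\leqslant k-1$ makes the target family's trace $(2-k)$ dominate the dimension count, and $n\geqslant6$ is where the domain contributions become sufficiently negative. Concretely, I would restrict to the subspace $\{V_a:a\perp\text{some }k-n\text{ directions}\}$, chosen by averaging over the Grassmannian of $(n+2)$-planes in $\R^{k+1}$. Averaging $\operatorname{tr}(Q|_P)$ over $(n+2)$-planes $P$ gives $\frac{n+2}{k+1}\operatorname{tr}A$, which is negative. I then need to upgrade "negative trace on a random plane" to "negative definite on some plane", which requires a pointwise estimate. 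The needed pointwise estimate is $\int\vert\nabla u\vert^2\langle a,u\rangle^2\geqslant\frac{1}{n+1}\int\vert\nabla u\vert^2\vert a\vert^2$ type bounds, which would come from using the quantity $\vert\nabla u\vert^2$ as a weight together with the homogeneity/stationarity identity.

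\textbf{Main obstacle.} The hard step is this last one: turning the averaged trace inequality into genuine negativity on an $(n+2)$-dimensional subspace. Here I would try to prove that $A$ has at most $k-n-1$ nonnegative eigenvalues by bounding the largest eigenvalue of $A$ via the stationarity (Pohozaev) identity for $u$ on $\Sp^n$, namely $\int\vert\nabla u\vert^2\,\mathrm{div}X=2\int\langle\nabla u,\nabla u\circ\nabla X\rangle$, which is the place where $n\geqslant6$ should be used. If the target family alone falls short, I would then add the domain fields $du(\nabla x_j)$ to supply the remaining negative directions.
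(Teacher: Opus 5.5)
There is a genuine gap. The step you flag as the main obstacle is essentially the whole proof, and the tools you propose for it cannot close it.

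For the target fields one has exactly $Q(V_a)=3\langle Ma,a\rangle-E(u)\vert a\vert^2$, with $M=\int_{\Sp^n}\vert\nabla u\vert^2uu^{\mathrm{T}}\dif\sigma$ and $\operatorname{tr}M=E(u)$. Hence at most three eigenvalues of $M$ can reach $E(u)/3$, and the $V_a$ alone give at least $k-2$ negative directions. That is $n+2$ only when $k\geqslant n+4$.

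For $k=n+1$, your claim that $A$ has at most $k-n-1$ nonnegative eigenvalues means $A$ is negative definite on all of $\R^{k+1}$, i.e.\ $\int\vert\nabla u\vert^2\langle a,u\rangle^2\dif\sigma<\frac13E(u)\vert a\vert^2$ for every $a$. Nothing in your plan gives this:
\begin{enumerate}
\item The stationarity identity tested with the conformal fields $\hat v$ of $\Sp^n$ only yields $(n-2)\int\vert\nabla u\vert^2x_\alpha\dif\sigma=0$. That is a statement about how the energy density is distributed on the domain, and it says nothing about the target-side matrix $M$.
\item The bound $\int\vert\nabla u\vert^2\langle a,u\rangle^2\geqslant\frac1{n+1}\int\vert\nabla u\vert^2\vert a\vert^2$ points the wrong way: a lower bound on $M$ makes $A$ less negative. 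It is also false already for the equatorial inclusion $\Sp^n\subset\Sp^{n+1}$ with $a$ normal.
\item Nothing excludes maps whose image lies in a great $2$-sphere. For those, $A$ restricted to that $3$-plane has trace $0$, so negative definiteness fails outright.
\end{enumerate}
Your fallback of adding the fields $\nabla_{\hat e_\alpha}u$ is where the real work lies, and it is left unspecified. The cross terms are $I_u(\nabla_{\hat e_\alpha}u,V_a)=-(n-2)\int\langle a,\nabla_{\hat e_\alpha}u\rangle\dif\sigma=-\frac{n(n-2)}{n+1}\langle a,\mathscr H_ue_\alpha\rangle$. They are governed by the first harmonic coefficient matrix, which you never control. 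They vanish for $a\perp\operatorname{Im}\mathscr H_u$, so a single such $a$ with $Q(V_a)<0$ would suffice, but nothing in your plan produces one. Finally, your trace-versus-largest-eigenvalue criterion needs an upper bound on every eigenvalue of $Q|_W$, which you do not have.

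The idea you are missing is that only one extra direction is needed. The $\nabla_{\hat e_\alpha}u$ already span an $(n+1)$-dimensional eigenspace for the eigenvalue $-(n-2)$ (Proposition~\ref{prop: fundamental space of index}). So it suffices to show that the bottom of the spectrum lies strictly below $-(n-2)$. The paper argues by contradiction from $I_u(X,X)\geqslant-(n-2)\int\vert X\vert^2\dif\sigma$ for all $X$.

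It tests this with the corrected fields $\hat e_j\circ u-(e_j\wedge\bar u)u$. The Killing correction leaves $I_u$ unchanged by Proposition~\ref{prop: infinitesimal isometry}, but lowers the $L^2$ norm to at most $1-\vert\bar u\vert^2-\int\langle e_j,u-\bar u\rangle^2\dif\sigma$. Summing over a basis gives $E(u)\leqslant\frac{(n-2)k}{k-2}(1-\vert\bar u\vert^2)$.

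The hypothesis $k>n$ then provides a unit vector $e\perp\operatorname{Im}\mathscr H_u$, whose component $\langle u,e\rangle$ has no first spherical harmonic. The spectral gap $2(n+1)$, combined with the same test field for $L=\R e$, yields $E(u)\geqslant\frac{2(n+1)^2}{2n+3}(1-\vert\bar u\vert^2)$. Since $\frac{(n-2)(n+1)}{n-1}<\frac{2(n+1)^2}{2n+3}$, this forces $\vert\bar u\vert=1$, i.e.\ $u$ is constant.

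So two ingredients are absent from your plan. One is the comparison of $E(u)$ with $1-\vert\bar u\vert^2$, which the Killing correction makes possible. The other is using $k>n$ through $(\operatorname{Im}\mathscr H_u)^{\perp}\neq0$ and the improved Poincar\'e inequality, rather than through a count of negative directions among target conformal fields. Incidentally, this comparison closes for every $3\leqslant n\leqslant k-1$, so $n\geqslant6$ is not where the estimate becomes favorable.
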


Recall that Y. L. Xin \cite{Xin} and A. El Soufi \cite{ElSoufiIndex} show that the index of a non-constant harmonic map $u\in C^{\infty}(\Sp^n,\Sp^k)$ is at least $n+1$, with the eigensections given by the covariant derivatives of $u$ with respect to conformal vector fields. We also refer readers to Section \ref{ss: pre, jacobi operator and index} below for details on the definition of index and Xin and El Soufi's index estimate. Hence, Theorem \ref{thm: main theorem 3} is non-trivial in that it provides at least one additional negative direction for the second variation of the map in the theorem.

\subsection{History and related results}

One of the central problems in the study of harmonic maps is its regularity theory. In their pioneering work, J. Eells and J. H. Sampson \cite{EellsSampson} initiated the study of harmonic maps, where they proved that the homotopy groups of between compact manifolds $M$ and manifold $N$ with nonpositive sectional curvature can be represented by smooth harmonic maps. S. Hildebrandt, H. Kaul and K. Widman \cite{HildebrandtKaulWidman} further establish a full regularity theory when the target manifold is negatively curved, where all weak harmonic maps are shown to be smooth. They also establish smoothness under the assumption that the image of the map is contained in a sufficiently small geodesic ball. 

The regularity of harmonic maps from 2-dimensional domains is also well-understood. Smoothness was established successively for energy minimizing maps by C. B. Morrey \cite{Morrey}, for stationary harmonic maps by R. Schoen \cite{SchoenAnalyticAspects} and finally for arbitrary weak harmonic maps by F. H\'elein \cite{Helein}.

The partial regularity theory for harmonic maps from higher dimensional domains to general targets originated in the seminal work of R. Schoen and K. Uhlenbeck \cite{SchoenUhlenbeckRegularity}. They establish the monotonicity formula and $\epsilon$-regularity theorem of energy minimizing maps. Combining these with the dimension reduction principle of H. Federer \cite{Federer}, they prove that energy minimizing maps are smooth away from a set of codimension at least 3. In \cite{SchoenUhlenbeckSphere}, they also show that the energy minimizing maps to hemispheres have singular sets of codimension at least 7.

However, the regularity for stationary harmonic maps is considerably more delicate. Even the proof of $\epsilon$-regularity is substantially more difficult than in the energy minimizing setting: it is established by L. C. Evans \cite{EvansSphere} for sphere valued maps and by F. Bethuel \cite{Bethuel} for general targets. T. Rivi\`ere and M. Struwe \cite{RiviereStruwe} extend $\epsilon$-regularity to $C^2$ target manifold using the method introduced by T. Rivi\`ere \cite{RiviereConservationLaw}. A further difficulty is that sequences of stationary harmonic maps need not converge strongly in $W^{1,2}$, even subsequentially.  The resulting loss of compactness gives rise to the defect measures and complicates dimension reduction argument. Consequently, currently it is only known that the singular set of a general stationary harmonic map has zero codimension 2 Hausdorff measure and a longstanding open question ask that whether the singular set must in fact have codimension at least 3. F. Lin \cite{LinGradientEstimates} gives a general description of this loss of compactness in terms of defect measures in arbitrary dimensions. The defect of energy in the convergence of harmonic maps is fully captured by bubbles, namely harmonic maps
from 2-sphere to the target. This phenomenon is known as the energy identity. It is established by J. Jost \cite{Jost}, T. H. Parker \cite{ParkerBubbleTree} for two dimensional domains, F. Lin and T. Rivi\`ere \cite{LinRiviere} for sphere-valued maps and finally by A. Naber and D. Valtorta \cite{naber2024energyidentitystationaryharmonic} for general targets.

In some cases the bubbling is avoidable. For example, in the stable stationary case, D. Hsu \cite{HsuStableharmonicmap} observes that the energy defect arising in the convergence of stable stationary harmonic maps is accounted for by stable harmonic 2-spheres; see also D. Hsu and J. Li\cite{HsuLiregularity} and M. Karpukhin and D. Stern \cite{MikhailStern}. Consequently, if the target manifold admits no stable harmonic 2-sphere, the dimension reduction argument can be applied once more to analyze the singular sets. The absence of stable harmonic 2-sphere can be guaranteed if the target has dimension at least 3 and has positive sectional curvature; see A. El Soufi \cite{ElSoufiTwoSphere}. In this case, the singular set of a stable stationary harmonic map has codimension at least 3. For homogeneous target spaces, results of T. Okayasu \cite{Okayasu} and J. Krantz \cite{Krantz} improves this bound to codimension at least four.

Finally, we list some works regarding the harmonic maps to spheres. The theory for this specific target is more accessible because of its high symmetry and constant curvature. W. J\"ager and H. Kaul \cite{JagerKaulMinimizing} study the stability and minimizing properties of the radial projection maps while T. Nakajima \cite{NakajimaRigidity,NakajimaIndex} establishes the rigidity of those maps under degree or index assumptions. E. Calabi \cite{Calabi}, J. Barbosa \cite{Barbosa}, M. Karpukhin \cite{KarpukhinIndex} obtain refined index estimates for harmonic 2-spheres in $n$-spheres.

\subsection{Sketch of the proofs}
We briefly sketch the main ideas behind the proofs of Theorem \ref{thm: main theorem 2} and Theorem \ref{thm: main theorem 3}. In earlier works, including R. Schoen and K. Uhlenbeck \cite{SchoenUhlenbeckSphere}, F. Lin  and C. Wang \cite{LinWangSphere}, weaker results in the same direction of Theorem \ref{thm: main theorem 2} are typically obtained by inserting the test fields $\vert\nabla u\vert\hat{e}_{j}\circ u$ in the second variation form $I_u$, summing over all $j$, and applying a Kato-type inequality for $\vert\nabla^2 u\vert$, Here $\hat e_{j}$ is the conformal vector field of $\Sp^k$. 

Although effective, this argument is generally not sharp when $n<k$. Indeed, summing over $j$ assigns the same weight to every ambient direction and therefore produces an isotropic estimate. Such an averaging is well adapted to the fully symmetric model $x\mapsto x/\vert x\vert$ when $n=k$, but not the case for the equatorial map $x\mapsto(x/\vert x\vert,0,\dots,0)$, when $n<k$. The advantage of choosing a geometrically distinguished direction is already visible in R. Schoen and K. Uhlenbeck's derivation of the optimal regularity estimate for minimizing harmonic maps into a  hemisphere \cite{SchoenUhlenbeckSphere}. A related phenomenon appears in J. Simons' analysis of stable codimension one minimal cones \cite{SimonsMinimal}, where the normal direction provide much more effective test variations than tangential directions.

Our approach is designed to preserve this directional information. Rather than averaging over all target directions, we choose test directions adapted to the geometry of the map and exploit those that contribute most strongly in the estimates. This yields sharper estimates while avoiding the more involved calculations required to derive the Kato-type inequality. 

In order to prove Theorem \ref{thm: main theorem 2}, we restrict the map to $\Sp^n$. The major type of the vector field we use is $\hat e_j\circ u-(e_j\wedge \bar u)u$, where $\bar u$ is the average of $u$ over domain $\Sp^n$, $(e_j\wedge\bar u)v=\langle\bar u,v\rangle e_j-\langle e_j,v\rangle \bar u$ is a skew-symmetric matrix, and $\lbrace e_j\rbrace$ is an orthonormal basis of a $d$-subspace $L\subset\R^{k+1}$. We calculate in Proposition \ref{prop: control wrt L} that this gives a bound 
$$3\int_{\Sp^n}\vert\nabla u\vert^2\vert\Pi_L u\vert^2\dif\sigma-dE(u)\geqslant\frac{(n-1)^2}{4}\left(\int_{\Sp^n}\vert\Pi_{L}(u-\bar u)\vert^2\dif \sigma-d(1-\vert\bar u\vert^2)\right).$$
Selecting $L$ to be the full space $\R^{k+1}$, we get
$$E(u)\leqslant\frac{(n-1)^2k}{4(k-2)}(1-\vert\bar u\vert^2).$$

On the other hand, it is important to bound the energy $E(u)$ from below. This part makes use of the spherical harmonic functions, which are the eigenfunctions of Laplace operators on $\Sp^n$. We expand the harmonic map of interest with respect to these eigenfunctions:
$$u(x)=\bar u+\mathscr{H}_u x+\text{ higher order terms},x\in\Sp^n.$$
In particular, this gives rise to a $(k+1)\times(n+1)$ matrix $\mathscr{H}_u$. When $k>n$, $(\operatorname{Im}\mathscr{H}_u)^{\perp}$ is non-trivial. If $L\subset (\operatorname{Im}\mathscr{H}_u)^{\perp}$, then $\Pi_L$ does not have first order term in spherical harmonic function decomposition. As such, we have improved constants in the Poincare inequality, since the third eigenvalue of $\Sp^n$ is $(2n+1)$:
$$\int_{\Sp^n}\vert\nabla\Pi_L u\vert^2\dif\sigma\geqslant2(n+1)\int_{\Sp^n}\vert\Pi_L(u-\bar u)\vert^2\dif\sigma.$$
Of course, the $L^{\perp}$ direction is controlled by normal Poincare inequality
$$\int_{\Sp^n}\vert\nabla\Pi_{L^{\perp}}u\vert^2\dif\sigma\geqslant n\int_{\Sp^n}\vert\Pi_{L^{\perp}}(u-\bar u)\vert^2\dif\sigma.$$
Combining with the first inequality above, we get lower bound of the energy
$$\left(\frac{(6n+6)n}{n+2}-\frac{(n-1)^2}{4}\left(d+\frac{n}{n+2}\right)\right)(1-\vert\bar u\vert^2)\leqslant\left(\frac{-n^2+14n-1}{4(n+2)}+(3-d)\right)E(u).$$

By selecting $d=1$ when $2\leqslant n\leqslant 4,k\geqslant n+1$ and $d=2$ when $n=5,k\geqslant n+2$, we can explicitly calculate that the coefficient of the upper bound of $E(u)$ derived above is less than that of the lower bound, forcing $E(u)=0$. For the index bound in Theorem \ref{thm: main theorem 3}, since the eigenvalue $-(n-2)$ occurs with multiplicity at least $n+1$, if we assume the index is exactly $n+1$, the first eigenvalue of Jacobi operator of $u$ is bounded below by $-(n-2)$. Similar reasoning, replacing $(n-1)^2/4$ by $(n-2)$ above, shows that such a map must be a constant. As mentioned at the beginning of this subsection, we should regard the directions in this $d$-subspace $L$ as the dominant directions of the test vector fields.

However, the case $(n,k)=(5,6)$ is more involved. Assume by contradiction $u$ is not a constant, the argument above only gives $(1-\vert \bar u\vert^2)/E(u)=1/6+\delta$ where $\delta\in[0,1/44]$. In order to improve the estimates, we carefully analyze the eigenvalues of $\mathscr{H}_u\mathscr{H}_u^{\text{T}}$. They are square of singular values of $\mathscr{H}_u$. We first take $L$ equals to the space corresponding to first $d$ eigenvectors of $\mathscr{H}_u\mathscr{H}_u^{\text{T}}$. Using weak supermajorization of vectors, we are able to derive lower bound 

$$\left(\sum_{j=1}^7\sqrt{\lambda_j}\right)^2\geqslant\frac{600}{143}E(u).$$

On the other hand, by taking $X_{\alpha}=\nabla_{\hat e_{\alpha}}u,Y_j=\hat e_j\circ u-(e_j\wedge \bar u)u$ in second variation of $u$ respectively, noticing that $(I_u(Y_j,X_{\alpha})+4\langle X_{\alpha},Y_j\rangle_{L^2})$ equals to $5\mathscr{H}_u$/6, we are able to use the stability inequality of $u$ to derive the lower bound

$$\left(\sum_{j=1}^7\sqrt{\lambda_j}\right)^2\leqslant\frac{144}{25}E(u)(6(1-\vert\bar u\vert^2)-E(u)).$$

Combining above two inequalities, the resulting upper bound exceeds the resulting upper bound, leading to a contradiction. Hence the remaining part of Theorem \ref{thm: main theorem 2} is also proved.

\subsection{Organization of the paper}
In Section \ref{s: pre}, we introduce necessary notations and definitions, as well as establish some preliminary results. In particular, in Section \ref{ss: pre, stable maps}, we introduce the stability inequality; In Section \ref{ss: pre, jacobi operator and index}, we introduce the Jacobi operator and index for harmonic maps; In Section \ref{ss: spherical harmonic}, we introduce spherical harmonic functions, and use it to define the spherical harmonic component of a map. In Section \ref{s: part 1}, we prove Theorem \ref{thm: main theorem 2} for $(n,k)\ne(5,6)$. In Section \ref{s: part 2}, we solve the remain $(n,k)=(5,6)$ case for Theorem \ref{thm: main theorem 2}. Finally, in Section \ref{s: index estimate}, we prove the index estimate in Theorem \ref{thm: main theorem 3}.

\subsection*{Acknowledgement}

The author is highly grateful to Prof. Xin Zhou and Prof. Daniel Stern for their valuable discussions on related topics, guidance and constant support. The author also wants to thank Zhihan Wang for fruitful discussions and his interest in this work. The author is supported by NSF grant DMS-2404992.

\subsection*{AI usage statement}

This work is assisted by ChatGPT 5.6 Sol. In particular, the proofs of Lemma \ref{lem: eigenvalue lower bound} and Lemma \ref{lem: eigenvalue upper bound} are given by ChatGPT while other proofs are due to the author. The author also makes necessary simplification to the proof given by AI. When preparing this manuscript, all the context and mathematical proof are entirely written by the author, and the AI is only used to improve its exposition. The author takes full responsibility for the final content of the paper.

\section{Preliminaries}\label{s: pre}
We first recall some preliminary results regarding harmonic maps. For simplicity, let us restrict the discussion to harmonic maps from a domain $\Omega$ in $\R^n$ or from round sphere $\Sp^n$. Denote by $\langle\cdot,\cdot\rangle$ the standard metric on $\R^{k+1}$. Note that it induces the round metric on $\Sp^k$. We refer readers to F. Lin and C. Wang \cite{LinWangBook} for a more comprehensive understanding of the harmonic maps.

\subsection{Stable stationary harmonic maps to spheres}\label{ss: pre, stable maps}

\begin{defn}\label{defn: definition of stationary and stable}
    Let $u\in W^{1,2}(\Omega,\Sp^k)$ be a weakly harmonic map. We say $u$ is 
    \begin{enumerate}
        \item \textbf{stationary}, if $u$ satisfies
    $$\int_{\Omega}\vert\nabla u\vert^2\operatorname{div}\xi\dif x=2\sum_{\alpha,\beta=1}^n\int\langle\p_{\alpha}u,\p_{\beta}u\rangle\p_{\alpha}\xi^{\beta},\text{ for all }\xi\in C_c^{\infty}(\Omega,\R^n);$$
        \item \textbf{stable}, if $u$ satisfies 
        $$\frac{\dif^2}{\dif t^2}\bigg|_{t=0}\int_{\Omega}\left\vert\nabla\frac{u+t X}{\vert u+t X\vert}\right\vert^2\dif x\geqslant0,\text{ for all }X\in W^{1,2}_c\cap L^{\infty}(\Omega,\R^{k+1}),\langle X,u\rangle=0\text{ a.e.}$$
        Or equivalently,
        $$\int_{\Omega}\vert(\nabla X\vert^2-\vert\nabla u\vert^2\vert X\vert^2)\dif x\geqslant0,$$
        for such $X$.
    \end{enumerate}
\end{defn}
For a vector field $X$ which takes values in $\R^{k+1}$ and a map $u$ to $\Sp^{k}$, the condition $\langle X(x),u(x)\rangle=0$ is equivalent to $X(x)\in T_{u(x)}\Sp^k$. If such a condition holds for almost every $x$ in the domain, we say that $X$ takes value in $T_u\Sp^k$. For simplicity, we denote the space of $X$ in Definition \ref{defn: definition of stationary and stable} (2) by $X\in W^{1,2}_c\cap L^{\infty}(\Omega,T_u\Sp^k)$. Other spaces of vector fields taking value in $T_u\Sp^k$ are similarly defined.

\begin{defn}
    There are two types of vector fields that will be of particular interest in this paper.
    \begin{enumerate}
        \item Given a unit vector $e\in\R^{k+1}$, define the conformal vector field on $\Sp^k$ by 
        $$\hat{e}(p)=e-\langle e,p\rangle p,p\in\Sp^k.$$
        They are the infinitesimal generators of the conformal diffeomorphisms of $\Sp^k$.
        \item Given a $(k+1)\times(k+1)$ skew-symmetric matrix $A\in\mathfrak{o}(k+1)$, it gives rise to a Killing vector field on $\Sp^k$ by 
        $$Ap,p\in\Sp^k.$$
        Recall that $\mathfrak{o}(k+1)$ is the Lie algebra of $O(k+1)$. In other words, $Ap$ is an infinitesimal generator of the isometry of $\Sp^k$.
    \end{enumerate}
    Given two vectors $v,w$ in $\R^{k+1}$, the skew-symmetric matrix induced by them is
    $$(v\wedge w)p=\langle w,p\rangle v-\langle v,p\rangle w.$$
\end{defn}

In general, the second variation of a harmonic map to a sphere is given by the following bilinear form.
\begin{defn}
    Let $u\in W^{1,2}(\Omega,\Sp^k)$ be a harmonic map. Given $X,Y\in W^{1,2}\cap L^{\infty}(\Omega,T_u\Sp^k)$. We define
    $$I_u(X,Y)=\frac{\p^2}{\p s\p_t}\bigg|_{s,t=0}\frac{1}{2}\int_{\Omega}\left\vert\nabla u_{s,t}\right\vert^2=\int_{\Omega}\langle\nabla X,\nabla Y\rangle-\vert\nabla u\vert^2\langle X,Y\rangle,$$
    where $(s,t)\mapsto u_{s,t}$ is a family of maps in $W^{1,2}(\Omega,\Sp^k)$ defined in a neighborhood of $(0,0)$ such that $\p_su_{s,t}|_{s,t=0}=X,\p_tu_{s,t}|_{s,t=0}=Y$.
\end{defn}

Since target isometries preserves the energy, we have the following consequence.

\begin{prop}\label{prop: infinitesimal isometry}
    Let $u\in W^{1,2}(\Omega,\Sp^k)$ be a harmonic map. For any $A\in\mathfrak{o}(k+1)$ and $X\in W^{1,2}_c\cap L^{\infty}(\Omega,T_u\Sp^k)$, we have that
    $$I_u(Au,X)=0.$$
\end{prop}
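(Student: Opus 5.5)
Since the energy is invariant under target isometries, the natural route is to differentiate the energy along the two-parameter family
$$u_{s,t}=\frac{e^{sA}(u+tX)}{\vert u+tX\vert}=e^{sA}\frac{u+tX}{\vert u+tX\vert},$$
which takes values in $\Sp^k$ for $(s,t)$ near $(0,0)$; here $X\in W^{1,2}_c\cap L^\infty$ and $\vert u+tX\vert\geqslant1$ because $\langle X,u\rangle=0$. We have $\p_su_{s,t}|_{0}=Au$, which is tangent since $\langle Au,u\rangle=0$ by skew-symmetry, and $\p_tu_{s,t}|_0=X$. Because $e^{sA}\in SO(k+1)$ is a constant orthogonal matrix, $\vert\nabla u_{s,t}\vert^2=\vert\nabla w_t\vert^2$ with $w_t=(u+tX)/\vert u+tX\vert$. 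Hence $\int_\Omega\vert\nabla u_{s,t}\vert^2$ does not depend on $s$, so its mixed derivative vanishes and, by the definition of $I_u$, $I_u(Au,X)=0$.

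The one technical point is that $Au$ is not compactly supported, so $\int\vert\nabla u_{s,t}\vert^2$ over $\Omega$ might be infinite. To avoid this I would restrict to a fixed compact set $K\supset\operatorname{supp}X$: the variation in $t$ only affects $K$, and the integral over $K$ is finite and still independent of $s$. It is cleaner, though, to verify the identity directly from the bilinear form. Writing $Y=Au$, we get $\nabla Y=A\nabla u$ and
$$\langle\nabla Y,\nabla X\rangle=\langle A\nabla u,\nabla X\rangle .$$
Since $X$ has compact support in $\Omega$, integrating by parts with $\Delta u=-\vert\nabla u\vert^2u$ in the weak sense gives
$$\int\langle A\nabla u,\nabla X\rangle=-\int\langle\nabla u,A\nabla X\rangle=\int\langle\Delta u,AX\rangle=-\int\vert\nabla u\vert^2\langle u,AX\rangle=\int\vert\nabla u\vert^2\langle Au,X\rangle .$$
The last step uses skew-symmetry once more. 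This cancels the zeroth order term exactly, so $I_u(Au,X)=0$.

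The only step needing care is justifying the weak integration by parts: that $AX\in W^{1,2}_c\cap L^\infty$ is an admissible test function for the weak harmonic map equation. This holds because $\vert\nabla u\vert^2\in L^1$ and $AX$ is bounded, so $\int\vert\nabla u\vert^2\langle u,AX\rangle$ is finite and the equation tested against $AX$ makes sense. Note also that the identity $\langle\Delta u,AX\rangle$ holds without projecting $AX$ onto $T_u\Sp^k$, since the equation is componentwise.
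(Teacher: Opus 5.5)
Your first paragraph is the paper's proof. The paper composes $u_t=(u+tX)/\vert u+tX\vert$ with the isometries $e^{sA}$, notes that the energy does not depend on $s$, and reads off $I_u(Au,X)=0$ as a vanishing mixed derivative.

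The route you prefer, testing the weak equation against $AX$ and using skew-symmetry of $A$ twice, is correct and is a more elementary presentation. It is really the same computation unpacked:
\begin{itemize}
\item Differentiating the $s$-invariance pointwise yields only the algebraic identity $\langle A\nabla u,\nabla X\rangle=-\langle\nabla u,A\nabla X\rangle$.
\item The mixed derivative is $\int\langle\nabla(Au),\nabla X\rangle+\int\langle\nabla u,\nabla(AX)\rangle$, since $\p_s\p_t u_{s,t}|_0=AX$. Identifying it with $I_u(Au,X)$ is exactly your step of testing the harmonic map equation with $AX$.
\end{itemize}
So the paper's symmetry argument is conceptual and carries over verbatim to Killing fields on any target. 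However, it tacitly relies on the mixed derivative along this particular family equalling $I_u$. Your version makes explicit where harmonicity enters and avoids discussing variations altogether.

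Two small points. First, the finiteness worry in your first paragraph is moot: $u\in W^{1,2}(\Omega)$ has finite energy, $e^{sA}$ is an isometry, and $u_t=u$ off $\operatorname{supp}X$. Second, finiteness of $\int\vert\nabla u\vert^2\langle u,AX\rangle$ does not by itself make $AX$ an admissible test function. You should approximate $AX$ by mollifications in $C^\infty_c(\Omega)$ that converge in $W^{1,2}$ and a.e.\ with a uniform $L^\infty$ bound. Then pass to the limit using dominated convergence against $\vert\nabla u\vert^2\in L^1$.
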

\begin{proof}
    Let $\phi_s:\Sp^k\rightarrow\Sp^k,s\in(-\epsilon,\epsilon)$ be the follow generated by the Killing vector field $p\mapsto Ap,p\in\Sp^k$. They are isometries of $\Sp^k$. Hence $E(\phi_s\circ u_t)=E(u_t)$ for all $s\in(-\epsilon,\epsilon)$, where $u_t=(u+t X)/\vert u+t X\vert$. As such $\p_s E(\phi_s\circ u_t)=0$ for any $s$, and 
    $$0=\frac{\p}{\p_t}\frac{\p}{\p_s}\bigg|_{s,t=0}\frac{1}{2}\int_{\Omega}\vert\nabla \phi_s\circ u_t\vert^2=I_u(Au,X).$$
\end{proof}

The regularity theory by R. Schoen and K. Uhlenbeck \cite{SchoenUhlenbeckRegularity} and D. Hsu \cite{HsuStableharmonicmap} tells us that singularities of stable stationary harmonic maps are modeled on tangent maps. Tangent maps are \textbf{0-homogeneous maps}, namely, maps $\tilde u:\R^n\rightarrow\Sp^k$ which are scaling invariant $\tilde u(\lambda x)=\tilde u(x)$ for all $\lambda>0$. We say a 0-homogeneous map $\tilde u$ is \textbf{regular}, if $\tilde u\in C^{\infty}(\R^n\setminus\{0\},\Sp^k)$. The study of 0-homogeneous maps typically reduces to the study of their restriction to $\Sp^{n-1}$. Note that the aforementioned $\tilde u$ is a harmonic map from $\R^n$ to $\Sp^k$ if and only if $u=\tilde u|_{\Sp^{n-1}}$ is a harmonic map from $\Sp^{n-1}$ to $\Sp^k$.

The following relationship of the second variation between a 0-homogeneous map and its restriction to sphere is also frequently used.
\begin{prop}[R. Schoen and K. Uhlenbeck \cite{SchoenUhlenbeckSphere}]\label{prop: stable inequality}
    Let $u\in C^{\infty}(\Sp^n,\Sp^k)$ be a harmonic map. If its homogeneous extension $\tilde u(x)=u(x/\vert x\vert)$ is stable, then
    $$I_u(X,X)\geqslant-\frac{(n-1)^2}{4}\int_{\Sp^n}\vert X\vert^2\dif\sigma,$$
    for all $X\in C^{\infty}(\Sp^n,T_u\Sp^k)$.
\end{prop}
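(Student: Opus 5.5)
The plan is to test the stability of $\tilde u$ on $\R^{n+1}$ against a product field $\tilde X(x)=\varphi(r)\,X(x/r)$, where $r=\vert x\vert$, $X\in C^\infty(\Sp^n,T_u\Sp^k)$ and $\varphi\in C_c^\infty((0,\infty))$. Since $\tilde u$ is 0-homogeneous, $\tilde X$ takes values in $T_{\tilde u}\Sp^k$. It is compactly supported away from the origin, so it is smooth and bounded and hence an admissible variation in Definition \ref{defn: definition of stationary and stable}(2).

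First I compute both sides in polar coordinates, with $\dif x=r^n\dif r\,\dif\sigma$. Because $X$ does not depend on $r$, we have $\vert\nabla\tilde X\vert^2=\varphi'(r)^2\vert X\vert^2+r^{-2}\varphi(r)^2\vert\nabla_{\Sp^n}X\vert^2$, and the homogeneity of $\tilde u$ gives $\vert\nabla\tilde u\vert^2=r^{-2}\vert\nabla_{\Sp^n}u\vert^2$. Stability then yields
$$0\leqslant\int_0^\infty\varphi'^2r^n\dif r\int_{\Sp^n}\vert X\vert^2\dif\sigma+\int_0^\infty\varphi^2r^{n-2}\dif r\; I_u(X,X),$$
where $I_u(X,X)=\int_{\Sp^n}\vert\nabla X\vert^2-\vert\nabla u\vert^2\vert X\vert^2\dif\sigma$ is the second variation on the sphere.

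Next, whenever $I_u(X,X)<0$, this rearranges to
$$-I_u(X,X)\leqslant\frac{\int_0^\infty\varphi'^2r^n\dif r}{\int_0^\infty\varphi^2r^{n-2}\dif r}\int_{\Sp^n}\vert X\vert^2\dif\sigma .$$
If $I_u(X,X)\geqslant0$ there is nothing to prove, since the claimed inequality holds trivially. It therefore suffices to show that the infimum of the radial Rayleigh quotient over $\varphi\in C_c^\infty((0,\infty))$ equals $(n-1)^2/4$, and in fact only the upper bound on this infimum is needed. This is the sharp weighted Hardy inequality $\int\varphi'^2r^n\geqslant\frac{(n-1)^2}{4}\int\varphi^2r^{n-2}$.

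The only real work is showing the constant is approached, which I expect to be the main technical step. I will use the near-extremal $\varphi(r)=r^{-(n-1)/2}$, truncated logarithmically. Substituting $r=e^t$ and $\varphi=r^{-(n-1)/2}\psi(\log r)$ turns the quotient into
$$\frac{\int(\psi'-\tfrac{n-1}{2}\psi)^2\dif t}{\int\psi^2\dif t}=\frac{(n-1)^2}{4}+\frac{\int\psi'^2\dif t}{\int\psi^2\dif t}.$$
The cross term $-(n-1)\int\psi\psi'\dif t=-\tfrac{n-1}{2}\int(\psi^2)'\dif t$ vanishes because $\psi$ has compact support. Now take $\psi$ to be a plateau of length $R$ with fixed cutoffs at both ends. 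Then $\int\psi'^2$ stays bounded while $\int\psi^2\sim R$, so the extra term tends to $0$ as $R\to\infty$. This gives the inequality of Proposition \ref{prop: stable inequality}.
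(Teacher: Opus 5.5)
Your argument is correct, and it is the standard Schoen--Uhlenbeck proof; the paper only cites that reference and does not reproduce it. The proof tests the stability of $\tilde u$ with $\varphi(r)X(x/r)$, separates variables, and uses the fact that log-truncations of $r^{-(n-1)/2}$ drive $\int\varphi'^2r^n\dif r/\int\varphi^2r^{n-2}\dif r$ down to the sharp Hardy constant $(n-1)^2/4$.

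Two cosmetic points. With the normalized measure one has $\dif x=\vert\Sp^n\vert\,r^n\dif r\,\dif\sigma$, but this constant cancels. The case split on the sign of $I_u(X,X)$ is also unnecessary, since you can divide by $\int\varphi^2r^{n-2}\dif r>0$ directly.
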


We need the following consequence of the stable inequality above. For simplicity, let us denote the normalized volume measure of $\Sp^n$ by $\sigma$, which is a constant multiple of the standard volume measure on $\Sp^n$ such that $\sigma(\Sp^n)=1$. All the integrals on $\Sp^n$ in this paper are taken with respect to $\sigma$. In particular, set the energy of $u$ to be
$$E(u):=\int_{\Sp^n}\vert\nabla u\vert^2\dif\sigma.$$

\begin{prop}\label{prop: control wrt L}
    Let $u$ be the harmonic map in Proposition \ref{prop: stable inequality}. For any $d$-dimensional subspace $L$ of $\R^{k+1}$, denoting by $\Pi_L$ the orthonormal projection to $L$, we have that
    $$3\int_{\Sp^n}\vert\nabla u\vert^2\vert\Pi_L u\vert^2\dif\sigma-dE(u)\geqslant\frac{(n-1)^2}{4}\left(\int_{\Sp^n}\vert\Pi_{L}(u-\bar u)\vert^2\dif \sigma-d(1-\vert\bar u\vert^2)\right),$$
    where $\bar u=\int_{\Sp^n}u\dif\sigma\in\R^{k+1}$ denotes the average of $u$ over $\Sp^n$.
    In particular, by taking $L=\R^{k+1}$, we get that
    $$(k-2)E(u)\leqslant\frac{(n-1)^2k}{4}(1-\vert\bar u\vert^2).$$
\end{prop}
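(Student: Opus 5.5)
The plan is to test the stability inequality of Proposition \ref{prop: stable inequality} with the fields
$$Y_j:=\hat e_j\circ u-(e_j\wedge\bar u)u,\qquad j=1,\dots,d,$$
where $\{e_j\}$ is an orthonormal basis of $L$, and then sum over $j$. Writing $a:=1-\langle\bar u,u\rangle$, one has
$$Y_j=a\,e_j-\langle e_j,u\rangle(u-\bar u).$$
A direct check gives $\langle Y_j,u\rangle=\langle e_j,u\rangle a-\langle e_j,u\rangle a=0$, so $Y_j$ is an admissible smooth section of $T_u\Sp^k$. Proposition \ref{prop: stable inequality} then gives, for each $j$,
$$I_u(Y_j,Y_j)\geqslant-\frac{(n-1)^2}{4}\int_{\Sp^n}\vert Y_j\vert^2\,\dif\sigma .$$

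\textbf{Step 1: removing the Killing part.} The field $(e_j\wedge\bar u)u$ is of the form $Au$ with $A\in\mathfrak{o}(k+1)$. Since $\Sp^n$ is closed, no compact support is needed in Proposition \ref{prop: infinitesimal isometry}, and it gives $I_u(Au,X)=0$ for every admissible $X$, including $X=Au$ itself. Hence
$$I_u(Y_j,Y_j)=I_u(\hat e_j\circ u,\hat e_j\circ u).$$

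\textbf{Step 2: the conformal fields.} Differentiating $\hat e\circ u=e-\langle e,u\rangle u$ gives $\nabla(\hat e\circ u)=-\langle e,\nabla u\rangle u-\langle e,u\rangle\nabla u$. Since $\langle u,\nabla u\rangle=0$, this yields
$$\vert\nabla(\hat e\circ u)\vert^2=\vert\nabla\langle e,u\rangle\vert^2+\langle e,u\rangle^2\vert\nabla u\vert^2,\qquad \vert\hat e\circ u\vert^2=1-\langle e,u\rangle^2.$$
The harmonic map equation gives $\Delta\langle e,u\rangle=-\vert\nabla u\vert^2\langle e,u\rangle$, and integrating by parts on $\Sp^n$ gives
$$\int\vert\nabla\langle e,u\rangle\vert^2=\int\vert\nabla u\vert^2\langle e,u\rangle^2 .$$
Therefore $I_u(\hat e\circ u,\hat e\circ u)=\int(3\langle e,u\rangle^2-1)\vert\nabla u\vert^2$. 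Summing over the basis of $L$ produces exactly the left-hand side $3\int\vert\nabla u\vert^2\vert\Pi_Lu\vert^2-dE(u)$.

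\textbf{Step 3: the mass term (the main obstacle).} This is the main step: I must show that
$$\sum_{j=1}^d\int_{\Sp^n}\vert Y_j\vert^2\,\dif\sigma=d(1-\vert\bar u\vert^2)-\int_{\Sp^n}\vert\Pi_L(u-\bar u)\vert^2\,\dif\sigma .$$
Pointwise, with $w=u-\bar u$, one has
$$\sum_j\vert Y_j\vert^2=d\,a^2-2a\langle\Pi_Lu,\Pi_Lw\rangle+\vert\Pi_Lu\vert^2\vert w\vert^2 .$$
I will simplify this using $\vert u\vert=1$, the identity $\vert w\vert^2=2a-(1-\vert\bar u\vert^2)$, and the relation $\int a\,\dif\sigma=1-\vert\bar u\vert^2$, which comes from $\int u\,\dif\sigma=\bar u$. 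If the pointwise sum does not collapse directly, I would instead expand $\vert Y_j\vert^2=\vert\hat e_j\circ u\vert^2-2\langle\hat e_j\circ u,(e_j\wedge\bar u)u\rangle+\vert(e_j\wedge\bar u)u\vert^2$ and integrate term by term, again using only $\int u=\bar u$. I expect this bookkeeping to be where care is needed, since it is the only place where the choice of subtracting the Killing field $(e_j\wedge\bar u)u$ matters. Combining Steps 1--3 with the stability inequality above gives the claimed inequality.

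\textbf{The particular case.} Take $L=\R^{k+1}$, so $d=k+1$ and $\vert\Pi_Lu\vert=1$. The left-hand side becomes $3E-(k+1)E=-(k-2)E$. On the right, $\int\vert u-\bar u\vert^2=1-\vert\bar u\vert^2$, so the bracket equals $-k(1-\vert\bar u\vert^2)$. Multiplying by $-1$ gives
$$(k-2)E(u)\leqslant\frac{(n-1)^2k}{4}(1-\vert\bar u\vert^2).$$
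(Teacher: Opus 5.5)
Your test fields, Step 1, Step 2 and the particular case $L=\R^{k+1}$ match the paper's argument and are correct. The gap is in Step 3: the identity you set out to prove is false in general, so neither of your bookkeeping routes can establish it. Take already $L=\R^{k+1}$. Your own pointwise formula, together with $\langle u,w\rangle=a$ and $\vert w\vert^2=2a-(1-\vert\bar u\vert^2)$, gives $\sum_j\vert Y_j\vert^2=(k-1)a^2+2a-(1-\vert\bar u\vert^2)$. Hence
\[
\sum_{j}\int_{\Sp^n}\vert Y_j\vert^2\dif\sigma=k(1-\vert\bar u\vert^2)-(k-1)\Bigl(\vert\bar u\vert^2-\int_{\Sp^n}\langle\bar u,u\rangle^2\dif\sigma\Bigr).
\]
The last bracket is strictly positive as soon as $\bar u\neq0$ and $u$ is non-constant, whereas your claimed value is $k(1-\vert\bar u\vert^2)$. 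In the term-by-term expansion, the first two pieces do integrate exactly using only $\int u\,\dif\sigma=\bar u$. The cross term gives $-2(\vert\bar u\vert^2-\langle e_j,\bar u\rangle^2)$, because $(e_j\wedge\bar u)u\perp u$. However, $\int\vert(e_j\wedge\bar u)u\vert^2\dif\sigma$ is quadratic in $u$ and is not determined by $\bar u$; this is exactly where equality breaks.

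The missing observation is that you only need the one-sided bound
\[
\sum_{j=1}^d\int_{\Sp^n}\vert Y_j\vert^2\dif\sigma\leqslant d(1-\vert\bar u\vert^2)-\int_{\Sp^n}\vert\Pi_L(u-\bar u)\vert^2\dif\sigma .
\]
This points in the right direction, since $\int\vert Y_j\vert^2\dif\sigma$ enters the stability inequality with a negative coefficient. The paper obtains it from the pointwise estimate $\vert(e_j\wedge\bar u)u\vert^2\leqslant\vert\bar u\vert^2-\langle e_j,\bar u\rangle^2$, which follows from the operator norm of $e_j\wedge\bar u$ and $\vert u\vert=1$. Combine this with $\int\vert\hat e_j\circ u\vert^2\dif\sigma=1-\int\langle e_j,u\rangle^2\dif\sigma$ and the exact cross term. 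This yields, for each $j$,
\[
\int_{\Sp^n}\vert Y_j\vert^2\dif\sigma\leqslant1-\vert\bar u\vert^2-\int_{\Sp^n}\langle e_j,u-\bar u\rangle^2\dif\sigma .
\]
Inserting this into $I_u(Y_j,Y_j)\geqslant-\frac{(n-1)^2}{4}\int\vert Y_j\vert^2\dif\sigma$ and summing over $j$ completes the proof. With Step 3 replaced by this inequality, your argument is the paper's.
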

\begin{proof}
    The last inequality of Proposition \ref{prop: control wrt L} comes from the fact that $\int_{\Sp^n}\vert u-\bar u\vert^2\dif\sigma=1-\vert\bar u\vert^2$. We only need to show the first inequality of Proposition \ref{prop: control wrt L}. To this end, let $\{e_1,\dots,e_{d}\}$ be an orthonormal basis of $L$. Let us consider the vector field $$X_j=\hat e_j\circ u-(e_j\wedge \bar u)u.$$
    Taking $X_j$ in the stability inequality and using Proposition \ref{prop: infinitesimal isometry} gives
    \begin{equation}\label{eq: calculation of second variation on conformal vector fields}
    \begin{aligned}
        I_u(X_j,X_j)&=I_u(\hat e_j\circ u,\hat e_j\circ u)\\&=\int_{\Sp^n}(\vert\nabla \hat e_j\circ u\vert^2-\vert\nabla u\vert^2\vert\hat e_j\circ u\vert^2)\dif\sigma\\&=\int_{\Sp^n}(\vert\nabla\langle e_j,u\rangle\vert^2+\langle e_j,u\rangle^2\vert\nabla u\vert^2-\vert\nabla u\vert^2(1-\langle e_j,u\rangle^2))\dif \sigma\\&=\int_{\Sp^n}3\langle e_j,u\rangle^2\vert\nabla u\vert^2\dif\sigma-E(u).
    \end{aligned}
    \end{equation}
    Note that, when computing $\vert\nabla \hat e_j\circ u\vert^2$, we used the fact that $\langle u,\p_{\alpha} u\rangle=0$ for all $\alpha$, which is a consequence of $\vert u\vert=1$. The last equality of \eqref{eq: calculation of second variation on conformal vector fields} is computed by multiplying the harmonic map equation $\Delta \langle u,e_j\rangle=-\vert\nabla u\vert^2\langle u,e_j\rangle$ by $\langle u,e_j\rangle$ and integration by parts, which gives
    \begin{equation}\label{eq: energy equality of components}
        \int_{\Sp^n}\vert\nabla u\vert^2\langle u,e_j\rangle^2\dif\sigma=\int_{\Sp^n}\vert\nabla\langle u,e_j\rangle\vert^2\dif\sigma.
    \end{equation}
    On the other hand, since $$\vert(e_j\wedge \bar u)u\vert^2\leqslant\vert e_j\wedge \bar u\vert^2=\vert\bar u\vert^2-\langle e_j,\bar u\rangle^2,$$
    and $$\langle\hat e_j\circ u,(e_j\wedge\bar u)u\rangle=\langle e_j,(e_j\wedge \bar u)u\rangle=\langle\bar u,u\rangle-\langle e_j,u\rangle\langle e_j,\bar u\rangle,$$
    we see that
    \begin{equation}\label{eq: upper bound of X_j}
        \begin{aligned}
            \int_{\Sp^n}\vert X_j\vert^2\leqslant 1-\int_{\Sp^n}\langle e_j,u\rangle^2\dif\sigma-(\vert\bar u\vert^2-\langle e_j,\bar u\rangle^2) =1-\vert\bar u\vert^2-\int_{\Sp^n}\langle e_j,u-\bar u\rangle^2\dif\sigma.
        \end{aligned}
    \end{equation}
    Combining \eqref{eq: calculation of second variation on conformal vector fields}, \eqref{eq: upper bound of X_j} and Proposition \ref{prop: stable inequality}, we see
    $$\int_{\Sp^n}3\langle e_j,u\rangle^2\vert\nabla u\vert^2\dif\sigma-E(u)\geqslant\frac{(n-1)^2}{4}\left(\int_{\Sp^n}\langle e_j,u-\bar u\rangle^2\dif\sigma-(1-\vert\bar u\vert^2)\right).$$
    Summing over $j$ gives the desired estimate.
\end{proof}

\subsection{Jacobi operator and index of harmonic maps}\label{ss: pre, jacobi operator and index}
For simplicity, we restrict the discussion of the Jacobi operator and the index to the harmonic maps $u\in C^{\infty}(\Sp^n,\Sp^k)$. Given the bilinear form $I_u$, there is one linear operator associated to it.

\begin{defn}
    Let $u\in C^{\infty}(\Sp^n,\Sp^k)$.
    \begin{enumerate}
        \item The \textbf{Jacobi operator} $L_u$ associated to $u$ is characterized by
        $$I_u(X,Y)=-\int_{\Sp^n}\langle L_u X,Y\rangle\dif\sigma,\text{ for all }X,Y\in W^{1,2}(\Sp^n,T_u\Sp^k).$$
        Alternatively, the Jacobi operator can be written explicitly by
        $$L_uX=\Delta X+\vert\nabla u\vert^2X+2\langle\nabla u,\nabla X\rangle u,\quad\text{ for }X\in C^{\infty}(\Sp^n,T_u\Sp^k).$$
        \item If a section $X$ solves $L_uX=0$, $X$ is called a \textbf{Jacobi field} of $u$.
        \item The \textbf{index} of $u$ is defined to be the dimension of maximal subspace of $W^{1,2}(\Sp^n,T_u\Sp^k)$ on which $I_u$ is negative definite. Alternatively, the index of $u$ equals to the number of negative eigenvalues of $L_u$, counted with multiplicity. 
    \end{enumerate}
\end{defn}

Jacobi fields should be viewed as infinitesimal variations of harmonic maps. More specifically, if $(u_t)_{t\in(-\epsilon,\epsilon)}$ is a family of harmonic maps, depending smoothly on $t$, then $\p_tu_t$ is a Jacobi field for all $t\in(-\epsilon,\epsilon)$.

We need the following proposition regarding the Jacobi field and the index.
\begin{prop}[Y. L. Xin \cite{Xin} and A. El Soufi \cite{ElSoufiIndex}]\label{prop: fundamental space of index}
    Let $u\in C^{\infty}(\Sp^n,\Sp^k)$ be a non-constant harmonic map.
    \begin{enumerate}
        \item For any $v\in\R^{n+1}$, the section $\nabla_{\hat v}u\in C^{\infty}(\Sp^n,T_u\Sp^k)$ is an eigensection of $L_u$, corresponding to the eigenvalue $-(n-2)$. Recall that $\hat v(\omega)=v-\langle\omega,v\rangle\omega\in T_{\omega}\Sp^n$ is the conformal vector field on $\Sp^n$.;
        \item Let $\lbrace e_1,\dots,e_{n+1}\rbrace$ be an orthonormal frame of $\R^{n+1}$. Then $\lbrace\nabla_{\hat e_{\alpha}}u\rbrace_{\alpha=1}^{n+1}$ forms an $(n+1)$-dimensional space. In particular, the index of $u$ is at least $n+1$, provided $n\geqslant3$.
    \end{enumerate}
\end{prop}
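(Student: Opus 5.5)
The plan is a direct computation for part (1), using the Bochner formula together with the special structure of conformal fields on $\Sp^n$. For part (2) we use a dynamical argument along the conformal flow.

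\textbf{Part (1).} Write $\hat v=\nabla f$ with $f(\omega)=\langle \omega,v\rangle$. This $f$ is a first eigenfunction of $\Sp^n$, so
$$\nabla^2 f=-f\,g,\qquad \Delta f=-nf,\qquad \operatorname{Ric}=(n-1)g.$$
Set $X=\nabla_{\hat v}u=\langle\nabla f,\nabla u\rangle$, computed componentwise in $\R^{k+1}$. Since $\langle u,\p_\alpha u\rangle=0$, $X$ is a section of $T_u\Sp^k$. The Bochner-type commutation formula for gradients of functions gives, componentwise,
$$\Delta\langle\nabla f,\nabla u\rangle=\langle\nabla\Delta f,\nabla u\rangle+2\langle\nabla^2 f,\nabla^2u\rangle+\langle\nabla f,\nabla\Delta u\rangle+2\operatorname{Ric}(\nabla f,\nabla u).$$
Inserting the identities above turns the right-hand side into
$$-nX-2f\Delta u+\nabla_{\hat v}(\Delta u)+2(n-1)X.$$
Now use the harmonic map equation $\Delta u=-\vert\nabla u\vert^2u$. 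This gives
$$\Delta X=(n-2)X-\vert\nabla u\vert^2X+\big(2f\vert\nabla u\vert^2-\hat v(\vert\nabla u\vert^2)\big)u.$$
Plugging into $L_uX=\Delta X+\vert\nabla u\vert^2X+2\langle\nabla u,\nabla X\rangle u$ yields
$$L_uX=(n-2)X+\varphi\, u$$
for some scalar function $\varphi$.

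It remains to show the normal part $\varphi u$ vanishes. This is the step where care is needed. The operator $L_u$ maps sections of $T_u\Sp^k$ to sections of $T_u\Sp^k$: differentiating $\langle X,u\rangle=0$ twice shows $\langle \Delta X,u\rangle=-2\langle\nabla X,\nabla u\rangle-\vert\nabla u\vert^2\langle X,u\rangle$, and the term $2\langle\nabla u,\nabla X\rangle u$ in $L_u$ is exactly the correction that cancels this normal component. Hence $\langle L_uX,u\rangle=0$, so $\varphi=0$.

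Therefore $L_uX=(n-2)X$, and
$$I_u(X,X)=-(n-2)\int_{\Sp^n}\vert X\vert^2\dif\sigma.$$
With the sign convention for $I_u$ (eigenvalues of $-L_u$), this is the eigenvalue $-(n-2)$.

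\textbf{Part (2).} The map $v\mapsto\nabla_{\hat v}u$ is linear, so it suffices to show it is injective. Suppose $\nabla_{\hat v}u\equiv0$ for some $v\neq0$. Then $u$ is constant along every integral curve of $\hat v$. Away from the two poles $\pm v/\vert v\vert$, these curves are the meridians, and each one converges to $v/\vert v\vert$ as $t\to\infty$. By continuity, $u\equiv u(v/\vert v\vert)$ on the complement of the poles, hence everywhere, contradicting that $u$ is non-constant.

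So $\{\nabla_{\hat e_\alpha}u\}_{\alpha=1}^{n+1}$ spans an $(n+1)$-dimensional space of eigensections with eigenvalue $-(n-2)$. For $n\geqslant3$ this eigenvalue is negative, so $I_u$ is negative definite on this space, and the index of $u$ is at least $n+1$.

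The main obstacle is the bookkeeping in part (1): keeping the normal components consistent and fixing the sign convention relating $L_u$ and the eigenvalue $-(n-2)$. The geometric input is just the three identities for $f$ and the Ricci curvature of $\Sp^n$.
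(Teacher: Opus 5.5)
Your proof is correct, but part (1) takes a different route from the paper's.

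\emph{The paper's route.} It never computes on $\Sp^n$ directly. It passes to the homogeneous extension $\tilde u$ and notes that translates $\tilde u(\cdot+tv)$ are harmonic, so $\nabla_v\tilde u(r\omega)=r^{-1}\nabla_{\hat v}u(\omega)$ is a Jacobi field of $\tilde u$. It then reads off $L_u\nabla_{\hat v}u=(n-2)\nabla_{\hat v}u$ from the polar form $L_{\tilde u}=\p_r^2+\frac{n}{r}\p_r+\frac{1}{r^2}L_u$, applied to a field homogeneous of degree $-1$.

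\emph{Your route.} You verify the eigen-equation intrinsically with the polarized Bochner formula, using only $\nabla^2 f=-fg$, $\Delta f=-nf$ and $\operatorname{Ric}=(n-1)g$. You then remove the normal component using the general fact that $L_u$ maps sections of $T_u\Sp^k$ to themselves. The computation checks out. The only slip is the sign of the $\vert\nabla u\vert^2\langle X,u\rangle$ term in your identity for $\langle\Delta X,u\rangle$; it should be $+$, but the term vanishes anyway.

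\emph{What each buys.} The paper's argument is shorter and explains where the eigenvalue comes from: translation symmetry of the cone, with the $r^{-1}$ scaling making the radial part contribute $2-n$. However, it takes the separation-of-variables formula for the cone's Jacobi operator for granted. Yours is self-contained and shows exactly which properties of the domain and of the conformal field $\hat v=\nabla f$ are used.

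\emph{Part (2).} This is the paper's argument in different form. The flow of $\hat v$ on $\Sp^n$ is the radial projection of translation along $v$. Your observation that every meridian flows into $v/\vert v\vert$ makes precise the paper's brief remark that translation invariance would spread the singularity at $0$ along $\R v$.
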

\begin{proof}
    For the completeness of the paper, we record a simple proof of this fact.
    
    For the part (1) of the lemma, let us consider the homogeneous extension $\tilde u(x)=u(x/\vert x\vert)$. Then for all $\omega\in\Sp^n$, using the fact that $\tilde u$ does not have radial derivative, we have 
    $$\nabla^{\R^{n+1}}_v\tilde u(\omega)=\nabla^{\Sp^n}_{\Pi_{T_{\omega}\Sp^n}v}u(\omega)=\nabla^{\Sp^n}_{\hat v(\omega)}u(\omega).$$
    Since $\tilde u(\cdot+tv)$ is harmonic for all $t$, we see $\nabla^{\R^{n+1}}_v\tilde u=\p_t\tilde u(\cdot+tv)|_{t=0}$ is a Jacobi field of $\tilde u$. Let us write the Jacobi field equation of $\nabla^{\R^{n+1}}_v\tilde u$ in polar coordinates $x=r\omega,r>0,\omega\in\Sp^n$. Note that $\nabla^{\R^{n+1}}_v\tilde u(x)=\nabla^{\R^{n+1}}_v\tilde u(\omega)/r=\nabla^{\Sp^n}_{\hat v}u(\omega)/r$.
    \begin{align*}
        0=L_{\tilde u}^{\R^{n+1}}\nabla_{v}^{\R^{n+1}}\tilde u&=\left(\frac{\p^2}{\p r^2}+\frac{n}{r}\frac{\p}{\p r}+\frac{1}{r^2}L_u^{\Sp^n}\right)\frac{\nabla^{\Sp^n}_{\hat v}u(\omega)}{r}\\&=(-(n-2)+L_u^{\Sp^n})\frac{\nabla^{\Sp^n}_{\hat v}u(\omega)}{r^3}.
    \end{align*}
    Taking $r=1$, we get (1) of the lemma. For (2), we need to show that $\lbrace\nabla_{\hat e_{\alpha}}^{\Sp^n} u\rbrace_{1\leqslant\alpha\leqslant n+1}$ is linearly independent. To see this, assume there exist $a_1,\dots, a_{n+1}$, such that
    $$0=\sum_{\alpha=1}^{n+1}a_{\alpha}\nabla^{\Sp^{n+1}}_{\hat{e}_{\alpha}}u(\omega)=\nabla^{\Sp^{n+1}}_{\hat{v}}u(\omega)=\nabla^{\R^{n+1}}_v\tilde u(\omega),$$
    where $v=\sum_{\alpha=1}^{n+1}a_{\alpha}e_{\alpha}$. As a result, $\tilde u$ is translation invariant in direction $v$. If $v\ne0$, the singular set of $\tilde u$ would contain $\R v$, contradicting the smoothness of $u$. Hence $v$ must be $0$, which implies the conclusion of the lemma. 
\end{proof}

\subsection{Spherical harmonic components}\label{ss: spherical harmonic}

In this subsection we recall the basic facts about the eigenvalues and eigenfunctions of the Laplace $\Delta$ operator on round spheres. 

The eigenvalues of $-\Delta$ on $\Sp^n$ are $\lambda_l=l(l+n-1)$ for $l=0,1,2\dots$ In particular, the first two eigenvalues and the corresponding eigenfunctions are
$$\lambda_0=0,\text{ eigenfunction }1;$$
$$\lambda_1=n,\text{ eigenfunctions }x_{\alpha},\alpha=1,\dots,n+1,$$
where the $x_{\alpha}$ are coordinate functions on $\R^{n+1}$, restricted to $\Sp^n$. And the higher eigenfunctions are given by the restriction of homogeneous harmonic polynomials to $\Sp^n$, correspond to eigenvalues bounded from below by $2(n+1)$. Moreover, they satisfy the normalization condition using normalized volume measure $\sigma$:
$$\int_{\Sp^n}1\dif\sigma=1;\int_{\Sp^n}x_{\alpha}\dif\sigma=0,\int_{\Sp^n}x_{\alpha}x_{\beta}\dif\sigma=\frac{1}{n+1}\delta_{\alpha\beta}\text{ for all }\alpha,\beta.$$
By spectral theory, any function $f\in W^{1,2}(\Sp^n)$ admits an $L^2$-orthogonal decomposition
$$f(x)=\bar f+\sum_{\alpha=1}^{n+1}a_{\alpha}x_{\alpha}+h(x),\text{ for }\bar f=\int_{\Sp^n}f\dif \sigma,a_\alpha=(n+1)\int_{\Sp^n}fx_{\alpha}\dif\sigma,$$
and the higher order term $h$ satisfies the estimate
$$2(n+1)\int_{\Sp^n}h^2\dif\sigma\leqslant\int_{\Sp^n}\vert\nabla h\vert^2\dif\sigma.$$

Given a harmonic map $u:\Sp^n\rightarrow\Sp^k$, to quantify the extent to which its image occupies different directions in the target $\Sp^k$, we consider its projection to the first eigenfunctions: 
$$a_{\alpha}=(n+1)\int_{\Sp^n}ux_{\alpha}\dif\sigma\in\R^{k+1}.$$
These vectors give rise to a $(k+1)\times(n+1)$ matrix (all vectors are regarded as column vectors).
\begin{defn}
    Let $u\in C^\infty(\Sp^n,\Sp^k)$ and vectors $a_{\alpha}$ be as above. Define
    $$\mathscr{H}_u=(a_1,\dots,a_{n+1}),$$
    and call it the \textbf{first harmonic coefficient matrix} of $u$.
\end{defn}
Heuristically, the dimension of $\operatorname{Im}\mathscr{H}_u$ should be thought as the virtual dimension of the image of $u$. Let us expand $u$ in terms of spherical harmonic functions
$$u(x)=\bar u+\mathscr{H}_ux+\text{higher order terms},\text{ where }\bar u=\int_{\Sp^n}u\dif\sigma\in\R^{k+1}.$$
The following fact is important in the remainder part of this paper. For any vector $e\in(\operatorname{Im}\mathscr{H}_u)^{\perp}$, we have that 
$$0=\langle a_{\alpha},e\rangle=(n+1)\int_{\Sp^n}\langle u,e\rangle x_{\alpha}\dif\sigma\text{ for all }\alpha.$$
This is equivalent to saying that the function $\langle u,e\rangle$ does not have first order term in its spherical harmonic decomposition. Given the lower bound of the eigenvalues $\lambda_l\geqslant2(n+1)$ for $l\geqslant2$, we have that
\begin{equation}\label{eq: better estimate for higher order terms}
    2(n+1)\int_{\Sp^n}\langle u-\bar u,e\rangle^2\dif\sigma\leqslant\int_{\Sp^n}\vert\nabla\langle u,e\rangle\vert^2\dif\sigma.
\end{equation}
We will see in later sections that how this improvement give rise to a better lower bound of the energy. 

\section{Nonexistence of stable tangent maps, when \texorpdfstring{$(n,k)\neq(5,6)$}{(n,k)!=(5,6)}}\label{s: part 1}
In this section, we are going to prove most of the Theorem \ref{thm: main theorem 2}.
\begin{thm}\label{thm: main theorem 2 part 1}
    Let $u\in C^{\infty}(\Sp^n,\Sp^k)$ be a harmonic map whose homogeneous extension $u(x/\vert x\vert)$ is stable. If $n<k$, $2\leqslant n\leqslant5$ and $(n,k)\ne(5,6)$, then $u$ is a constant.    
\end{thm}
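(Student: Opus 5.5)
The plan is to play two inequalities against each other: an upper bound for $E(u)$ in terms of $V:=1-\vert\bar u\vert^2$, taken from Proposition \ref{prop: control wrt L} with $L=\R^{k+1}$, and a lower bound for $E(u)$ in terms of $V$, obtained from Proposition \ref{prop: control wrt L} with $L$ chosen inside $(\operatorname{Im}\mathscr{H}_u)^{\perp}$. For $n\geqslant 2$ and $k\geqslant 3$ the two bounds will turn out to be incompatible unless $E(u)=0$.

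\textbf{Setting up $L$.} Assume $u$ is nonconstant. Then $V>0$: if $V=0$, then $u\equiv\bar u$ almost everywhere. The matrix $\mathscr{H}_u$ has rank at most $n+1$, so $\dim(\operatorname{Im}\mathscr{H}_u)^{\perp}\geqslant k-n$. I choose a $d$-dimensional $L\subset(\operatorname{Im}\mathscr{H}_u)^{\perp}$ with
\begin{itemize}
\item $d=1$ when $2\leqslant n\leqslant 4$ (possible since $k\geqslant n+1$);
\item $d=2$ when $n=5$ (possible since $(n,k)\neq(5,6)$ forces $k\geqslant 7$).
\end{itemize}
Put $c=(n-1)^2/4$, $A=\int_{\Sp^n}\vert\Pi_L(u-\bar u)\vert^2\dif\sigma$, and $B=\int_{\Sp^n}\vert\nabla\Pi_Lu\vert^2\dif\sigma$. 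By \eqref{eq: energy equality of components} summed over a basis of $L$, we have $\int\vert\nabla u\vert^2\vert\Pi_Lu\vert^2\dif\sigma=B$. So Proposition \ref{prop: control wrt L} reads
$$3B-dE\geqslant cA-cdV.$$

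\textbf{Bounds on $A$ and $B$.} Applying \eqref{eq: better estimate for higher order terms} to each basis vector of $L$ gives $B\geqslant 2(n+1)A$. The ordinary Poincar\'e inequality on $L^{\perp}$ gives
$$E-B=\int\vert\nabla\Pi_{L^\perp}u\vert^2\dif\sigma\geqslant n\int\vert\Pi_{L^\perp}(u-\bar u)\vert^2\dif\sigma=n(V-A).$$
Hence $B\leqslant E-nV+nA$. Combining this with $B\geqslant 2(n+1)A$ yields
$$A\leqslant\frac{E-nV}{n+2}.$$
Substituting $B\leqslant E-nV+nA$ into the stability inequality gives
$$(3-d)E\geqslant(3n-cd)V+(c-3n)A.$$
For $n\leqslant 5$ we have $c<3n$, so the upper bound on $A$ may be inserted. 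The result is
$$\Bigl(3-d+\frac{3n-c}{n+2}\Bigr)E\geqslant\Bigl(3n-cd+\frac{n(3n-c)}{n+2}\Bigr)V,$$
which is exactly the lower bound displayed in the introduction.

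\textbf{Closing the argument.} From the last part of Proposition \ref{prop: control wrt L} we have
$$E\leqslant\frac{ck}{k-2}V\leqslant 3cV \quad (k\geqslant 3).$$
The ratio $k/(k-2)$ is decreasing in $k$, so it suffices to take the smallest admissible $k$ in each case: $k=n+1$ for $n\leqslant 4$ (and $k=3$ when $n=2$), and $k=7$ for $n=5$. It then remains to check, in each case $n=2,3,4,5$, that
$$\frac{ck}{k-2}\cdot\Bigl(3-d+\frac{3n-c}{n+2}\Bigr)<3n-cd+\frac{n(3n-c)}{n+2}.$$
This contradicts $V>0$ (since $V\leqslant E/(\text{lower coefficient})$ and $E\leqslant(\text{upper coefficient})V$), which forces $E(u)=0$, so $u$ is constant.

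For example, when $n=5$ ($c=4$, $d=2$, $k=7$) the upper coefficient is $28/5$. The lower bound becomes $(1+11/7)E\geqslant(7+55/7)V$, that is $E\geqslant(104/18)V\approx 5.78\,V>5.6\,V$.

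\textbf{Expected obstacle.} The main obstacle is this final numerical verification, especially the borderline case $n=5$. There the margin is thin, and it disappears for $k=6$, where only $d=1$ is available; this explains why that case is excluded. I would check each $n$ by direct arithmetic, making sure that the sign conditions $c<3n$ and $3-d>0$ hold so that every step of the combination is legitimate.
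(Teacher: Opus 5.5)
Your proposal is correct and follows essentially the same route as the paper. The paper also takes $L\subset(\operatorname{Im}\mathscr{H}_u)^{\perp}$ with $d=1$ for $n\leqslant 4$ and $d=2$ for $n=5$, and eliminates $\int|\Pi_L(u-\bar u)|^2\,\dif\sigma$ between the improved Poincar\'e bound and Proposition~\ref{prop: control wrt L} to get \eqref{eq: canceling L-energy}. It then compares with the $L=\R^{k+1}$ upper bound at the worst admissible $k$. The arithmetic you left for $n=2,3,4$ does go through: the lower coefficients $138/55$, $32/9$, $130/29$ exceed the upper coefficients $3/4$, $2$, $15/4$. Your $n=5$ check $52/9>28/5$ matches the paper's.
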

\begin{proof}
    Recall that the map $u$ induces its first harmonic coefficient matrix $\mathscr{H}_u$, which is a $(k+1)\times (n+1)$ matrix. Let us take a linear subspace $L\subset(\operatorname{Im}\mathscr{H}_u)^{\perp}$. Since the rank of $\mathscr{H}_u$ is at most $n+1<k+1$, we can take $d=\dim L$ to be at least 1. By the discussion in Subsection \ref{ss: spherical harmonic}, in particular \eqref{eq: better estimate for higher order terms}, we have two inequalities related to $L$.
    $$2(n+1)\int_{\Sp^n}\vert\Pi_L(u-\bar u)\vert^2\dif\sigma\leqslant\int_{\Sp^n}\vert\nabla\Pi_Lu\vert^2\dif\sigma=\int_{\Sp^n}\vert\nabla u\vert^2\vert\Pi_Lu\vert^2,$$
    where we used the equation \eqref{eq: energy equality of components} in the last equality. The $L^{\perp}$ component instead satisfies
    \begin{equation}\label{eq: upper bound of L-energy}
        E(u)-\int_{\Sp^n}\vert\nabla\Pi_L u\vert^2\dif\sigma=\int_{\Sp^n}\vert\nabla\Pi_{L^{\perp}} u\vert^2\dif\sigma\geqslant n\int_{\Sp^n}\vert\Pi_{L^{\perp}}(u-\bar u)\vert^2\dif\sigma.
    \end{equation}
    Combining these two inequalities, we get that
    \begin{equation}\label{eq: energy lower bound wrt kernel}
    \begin{aligned}
        E(u)&\geqslant n\int_{\Sp^n}\vert\Pi_{L^{\perp}}(u-\bar u)\vert^2\dif\sigma+2(n+1)\int_{\Sp^n}\vert\Pi_{L}(u-\bar u)\vert^2\dif\sigma\\&=n(1-\vert\bar u\vert^2)+(n+2)\int_{\Sp^n}\vert\Pi_L(u-\bar u)\vert^2\dif\sigma.
    \end{aligned}
    \end{equation}
    On the other hand, putting \eqref{eq: upper bound of L-energy} into Proposition \ref{prop: control wrt L}, we get
    \begin{equation}\label{eq: improved upper bound wrt L}
        \begin{aligned}
            0\leqslant&3\int_{\Sp^n}\vert\nabla u\vert^2\vert\Pi_Lu\vert^2\dif\sigma-dE(u)+\frac{(n-1)^2}{4}\left(d(1-\vert\bar u\vert^2)-\int_{\Sp^n}\vert\Pi_L(u-\bar u)\vert^2\dif\sigma\right)\\\leqslant&(3-d)E(u)+\left(\frac{(n-1)^2}{4}d-3n\right)(1-\vert\bar u\vert^2)+\left(3n-\frac{(n-1)^2}{4}\right)\int_{\Sp^n}\vert\Pi_L(u-\bar u)\vert^2\dif\sigma.
        \end{aligned}
    \end{equation}
    Note that the coefficient $3n-(n-1)^2/4$ is positive when $2\leqslant n\leqslant 5$. Canceling the term $\int_{\Sp^n}\vert\Pi_L(u-\bar u)\vert^2\dif\sigma$ in \eqref{eq: energy lower bound wrt kernel} and \eqref{eq: improved upper bound wrt L} yields
    \begin{equation}\label{eq: canceling L-energy}
        \begin{aligned}
            \left(\frac{(6n+6)n}{n+2}-\frac{(n-1)^2}{4}\left(d+\frac{n}{n+2}\right)\right)(1-\vert\bar u\vert^2)\leqslant\left(\frac{-n^2+14n-1}{4(n+2)}+(3-d)\right)E(u).
        \end{aligned}
    \end{equation}
    Let us compare \eqref{eq: canceling L-energy} with the last inequality of Proposition \ref{prop: control wrt L}, which is 
    \begin{equation}\label{eq: full energy control}
        E(u)\leqslant\frac{(n-1)^2k}{4(k-2)}(1-\vert\bar u\vert^2).
    \end{equation}
    
    We have two cases.
    
    \begin{itemize}[leftmargin=*]
        \item $2\leqslant n\leqslant 4$.
        
        In this case, let us simply take $d=1$. Then \eqref{eq: canceling L-energy} becomes
        $$E(u)\geqslant(2n+2)\left(\frac{-n^2+14n-1}{-n^2+22n+15}\right)(1-\vert\bar u\vert^2).$$
        Let us look at the numbers $n$ which fulfills the inequality $$(2n+2)\left(\frac{-n^2+14n-1}{-n^2+22n+15}\right)\geqslant\frac{(n-1)^2k}{4(k-2)}.$$
        When $k\geqslant n+1$, the right hand side is at most $(n^2-1)/4$, which takes value $3/4,2,15/4$ when $n=2,3,4$ respectively, which are less than the corresponding left hand side value $138/55,32/9,130/29$ (when $n=5$ and $k=6$, LHS$=132/25<6=$RHS, though).

        This upper bound of $E(u)$ in \eqref{eq: full energy control} is even less than the lower bound of $E(u)$ above for $2\leqslant n\leqslant4$ and $k\geqslant n+1$, unless $E(u)=0$. Hence, $u$ is forced to be constant for $n,k$ in the current range. 
        \item $n=5$ and $k\geqslant7$.
        In this case, $(\operatorname{Im}\mathscr{H}_u)^{\perp}$ has dimension at least $k-n\geqslant2$. Hence, we may select $d=2$ in the preceeding discussion. Given $n=5$, \eqref{eq: canceling L-energy} and \eqref{eq: full energy control} respectively become
        $$\frac{52}{9}(1-\vert\bar u\vert^2)\leqslant E(u)\text{ and }E(u)\leqslant\frac{4k}{k-2}(1-\vert\bar u\vert^2)\leqslant\frac{28}{5}(1-\vert\bar u\vert^2).$$
        Since $52/9>28/5$, again $u$ is forced to be a constant.
    \end{itemize}
\end{proof}

\section{Nonexistence of stable tangent maps, when \texorpdfstring{$(n,k)=(5,6)$}{(n,k)=(5,6)}}\label{s: part 2}

Now, let us focus on the remaining part of Theorem \ref{thm: main theorem 2}.
\begin{thm}\label{thm: main theorem part 2}
    Let $u\in C^{\infty}(\Sp^5,\Sp^6)$ be a harmonic map whose homogeneous extension $u(x/\vert x\vert)$ is stable. Then $u$ is a constant.
\end{thm}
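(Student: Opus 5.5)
We argue by contradiction and follow the strategy outlined in the introduction. Suppose $u$ is non-constant and write $\rho:=(1-\vert\bar u\vert^2)/E(u)$. The case $n=5$, $k=6$, $d=1$ of the argument in Theorem \ref{thm: main theorem 2 part 1} still applies. Inequality \eqref{eq: full energy control} gives $E(u)\leqslant 6(1-\vert\bar u\vert^2)$, i.e.\ $\rho\geqslant 1/6$. Inequality \eqref{eq: canceling L-energy} with $d=1$ gives $\rho\leqslant 25/132$. So $\rho=1/6+\delta$ with $\delta\in[0,1/44]$, and the energy is pinned near the borderline. The goal is to produce two incompatible estimates on the singular values of the first harmonic coefficient matrix $\mathscr{H}_u$, a $7\times 6$ matrix. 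Let $\lambda_1\leqslant\dots\leqslant\lambda_7$ be the eigenvalues of $\mathscr{H}_u\mathscr{H}_u^{T}$; at least one of them is $0$.

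\textbf{Lower bound.} We repeat the computation of Section \ref{s: part 1} with $L$ spanned by eigenvectors of $\mathscr{H}_u\mathscr{H}_u^T$ belonging to the smallest $d$ eigenvalues, rather than $L\subset(\operatorname{Im}\mathscr{H}_u)^\perp$. Now $\Pi_L u$ has a first-order part of size $\frac{1}{n+1}\sum_{j\leqslant d}\lambda_j$ in $L^2$. The improved Poincar\'e inequality \eqref{eq: better estimate for higher order terms} therefore picks up an error term linear in these $\lambda_j$. Feeding this into Proposition \ref{prop: control wrt L} and \eqref{eq: energy lower bound wrt kernel}, and using the pinching $\rho\approx 1/6$, gives for each $d$ an inequality of the form $\sum_{j\leqslant d}\lambda_j\geqslant c_d E(u)$. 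We also have the identity $\sum_j\lambda_j=(n+1)\int\vert \text{first order part}\vert^2$. Combined with the energy split $E(u)\geqslant 5\cdot(\text{first order})+12\cdot(\text{higher order})$ and $\int\vert u-\bar u\vert^2=1-\vert\bar u\vert^2$, this gives further linear constraints. The family of partial-sum inequalities is a weak supermajorization statement for the vector $(\lambda_j)$. Minimizing the concave function $(\sum\sqrt{\lambda_j})^2$ over this polytope then yields a bound $(\sum_j\sqrt{\lambda_j})^2\geqslant c\,E(u)$; the target constant is $600/143$.

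\textbf{Upper bound.} This step exploits stability through a mixed test field. We take $X_\alpha=\nabla_{\hat e_\alpha}u$, eigensections with eigenvalue $-(n-2)=-3$ by Proposition \ref{prop: fundamental space of index}, and $Y_j=\hat e_j\circ u-(e_j\wedge\bar u)u$. We apply Proposition \ref{prop: stable inequality} to combinations $Y+tX$, i.e.\ to the quadratic form $I_u+4\Vert\cdot\Vert^2_{L^2}\geqslant 0$ restricted to the span, and use Cauchy--Schwarz: $(I_u(Y,X)+4\langle X,Y\rangle)^2\leqslant (I_u+4)(X,X)\,(I_u+4)(Y,Y)$. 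First, $(I_u+4)(X_\alpha,X_\alpha)=\int\vert\nabla_{\hat e_\alpha}u\vert^2$, and the sum over $\alpha$ gives $\frac{n}{n+1}E(u)$-type quantities. Second, $(I_u+4)(Y_j,Y_j)$ was computed in Proposition \ref{prop: control wrt L}. Third, the cross term should come out as $\frac{5}{6}(\mathscr{H}_u)_{j\alpha}$ after integration by parts: $\langle \hat e_j\circ u,\nabla_{\hat e_\alpha}u\rangle=\hat e_\alpha\langle e_j,u\rangle$, and $\operatorname{div}\hat e_\alpha=-nx_\alpha$. We then take the pairing over singular vectors of $\mathscr{H}_u$ (a trace-norm duality) to obtain $(\sum\sqrt{\lambda_j})^2\leqslant C\,E(u)\,(6(1-\vert\bar u\vert^2)-E(u))$ with $C=144/25$. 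Since $6(1-\vert\bar u\vert^2)-E(u)=6\delta E(u)\leqslant\frac{6}{44}E(u)$, the right side is at most $\frac{144}{25}\cdot\frac{3}{22}E(u)^2$. This must be compared with the lower bound after normalizing appropriately, using $E(u)\leqslant 6(1-\vert\bar u\vert^2)\leqslant 6$ or the homogeneity in $E$, and it should contradict the lower bound.

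I expect the main obstacle to be the majorization step. One needs the sharp choice of $d$-subspaces and the extremal configuration of $(\lambda_j)$, together with careful bookkeeping of the first-order error terms, so that the final constants ($600/143$ versus $144/25$ times a small factor) actually cross. I would first try to reduce to finitely many extremal points of the polytope and check each one numerically.
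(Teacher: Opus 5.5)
Your architecture is the paper's. For the lower bound, you take $L$ spanned by the bottom-$d$ eigenvectors of $\mathscr{H}_u\mathscr{H}_u^{T}$ and combine three inequalities: the stability bound for $L$, Poincar\'e on $L^\perp$, and the split $5\cdot(\text{first order})+12\cdot(\text{rest})$ on $L$; weak supermajorization then gives Lemma \ref{lem: eigenvalue lower bound}. For the upper bound, you use the positive semidefinite form $H=I_u+4\langle\cdot,\cdot\rangle_{L^2}$ on $X_\alpha=\nabla_{\hat e_\alpha}u$ and $Y_j$, with cross term $\frac56\mathscr{H}_u$; your singular-vector pairing is equivalent to the paper's Gram factorization with the nuclear-norm bound (Lemma \ref{lem: eigenvalue upper bound}).

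Two bookkeeping corrections:
\begin{itemize}
\item $\sum_\alpha H(X_\alpha,X_\alpha)$ is exactly $E(u)$, not an $\frac{n}{n+1}E(u)$-type quantity. This is because $\sum_\alpha\hat e_\alpha\otimes\hat e_\alpha$ is the orthogonal projection onto $T_\omega\Sp^5$. The constant $144/25$ you quote requires this identity.
\item The partial-sum bounds are not of the form $c_dE(u)$ with fixed $c_d$. They are $\Lambda_d\geqslant\frac{E(u)}{77}\bigl(14d-32+(960-168d)\delta\bigr)$. The constant $600/143$ appears only after minimizing the resulting $F(\delta)$ over $\delta\in[0,1/44]$, with worst case $\delta=1/156$.
\end{itemize}

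The genuine gap is the final step, which you leave as ``it should contradict''. Set $Q:=6(1-\vert\bar u\vert^2)-E(u)$.
\begin{itemize}
\item With the uniform constants, the two lemmas give $Q\geqslant\frac{600}{143}\cdot\frac{25}{144}=\frac{625}{858}\approx0.728$.
\item Your closing bounds $Q=6\delta E(u)\leqslant\frac{3}{22}E(u)$ and $E(u)\leqslant 6$ only give $Q\leqslant\frac{9}{11}\approx0.818$.
\item Equivalently, your inequality yields $E(u)\geqslant\frac{330000}{61776}\approx5.34$, which is compatible with $E(u)\leqslant6$. So nothing crosses.
\end{itemize}
There is also no homogeneity in $E$ to exploit. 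The two bounds have different degrees in $E(u)$, and $u$ is sphere-valued, so the only normalization available is $1-\vert\bar u\vert^2\leqslant1$, and it must be used more sharply.

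The missing observation is that $E(u)+Q=6(1-\vert\bar u\vert^2)\leqslant6$. Hence $\frac{22}{3}Q\leqslant E(u)\leqslant 6-Q$, which gives $Q\leqslant\frac{18}{25}=0.72<\frac{625}{858}$. This is how the paper concludes. The margin is thin enough that the cruder bound $E(u)\leqslant6$ genuinely does not suffice with uniform constants.

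Alternatively, you can keep the $\delta$-dependent lower bound $E(u)F(\delta)^2/77$ instead of its minimum over $\delta$. Then the required inequality is $\frac{25F(\delta)^2}{144\cdot 77}>36\delta$ on $(0,1/44]$. This does close with $E(u)\leqslant 6$, but only barely near $\delta=1/44$: about $0.826$ versus $0.818$.
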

\begin{proof}
    Throughout this proof let us assume that $u$ is not a constant to get a contradiction. Under this assumption, we have that $E(u)>0$ and $\vert\bar u\vert<1$. Let $L$ be a $d$-dimensional subspace of $\R^7$. Having fixed $n=5$ and $k=6$, conclusions of Proposition \ref{prop: control wrt L} take special forms
    \begin{equation}\label{eq: control wrt L, n=5}
        3\int_{\Sp^5}\vert\nabla u\vert^2\vert\Pi_Lu\vert^2\dif\sigma-dE(u)\geqslant4\left(\int_{\Sp^5}\vert\Pi_L(u-\bar u)\vert^2\dif\sigma-d(1-\vert\bar u\vert^2\right),
    \end{equation}
    together with taking $d=1$ in \eqref{eq: canceling L-energy}, we have two-sided estimate of $E(u)$, 
    \begin{equation}\label{eq: full energy control, n=5}
        \frac{132}{25}(1-\vert\bar u\vert^2)\leqslant E(u)\leqslant6(1-\vert\bar u\vert^2).
    \end{equation}
    
    We need the following estimate regarding the eigenvalues of $\mathscr{H}_u\mathscr{H}_u^{\text{T}}$, which is a $7\times 7$ matrix.
    
    \begin{lem}\label{lem: eigenvalue lower bound}
        Let $0\leqslant\lambda_1\leqslant\cdots\leqslant\lambda_7$ be the eigenvalues of $\mathscr{H}_u\mathscr{H}_u^{\text{T}}$. We have
        $$\left(\sum_{j=1}^7\sqrt{\lambda_j}\right)^2\geqslant\frac{600}{143}E(u).$$
    \end{lem}
    \begin{proof}
    Since the eigenvalues of the Laplace operator on $\Sp^5$ are $0,5,12\dots$, the Poincare inequality applied to $\Pi_{L^{\perp}}(u-\bar u)$ gives
    \begin{equation}\label{eq: lemma 4.2 eq1}
        E(u)-\int_{\Sp^5}\vert\nabla\Pi_Lu\vert^2\dif\sigma\geqslant5\left((1-\vert\bar u\vert^2-\int_{\Sp^5}\vert\Pi_L(u-\bar u)\vert^2\dif\sigma\right).
    \end{equation}
    
    If we write $u=\bar u+\mathscr{H}_ux+$higher order terms, we see $\Pi_Lu=\Pi_L\bar u+\Pi_{L}\mathscr{H}_ux+$higher order terms is still of the form of decomposition through eigenfunctions of $\Sp^5$. Hence,
    \begin{equation}\label{eq: lem 4.2 eq2}
        \int_{\Sp^5}\vert\nabla\Pi_L u\vert^2\dif\sigma\geqslant5\int_{\Sp^5}\vert\Pi_L\mathscr{H}_ux\vert^2\dif\sigma+12\int_{\Sp^5}(\vert\Pi_L(u-\bar u)\vert^2-\vert\Pi_L\mathscr{\mathscr{H}_u}x\vert^2)\dif\sigma.
    \end{equation}
    Eliminating the terms $\int_{\Sp^5}\vert\nabla \Pi_Lu\vert^2\dif\sigma$ and $\int_{\Sp^5}\vert\Pi_L(u-\bar u)\vert^2\dif\sigma$ in above two inequalities and \eqref{eq: control wrt L, n=5}, we first deduce (using \eqref{eq: control wrt L, n=5} and \eqref{eq: lemma 4.2 eq1})
    $$\int_{\Sp^5}\vert\Pi_L(u-\bar u)\vert^2\dif\sigma\geqslant\frac{(d-3)E(u)+(15-4d)(1-\vert\bar u\vert^2)}{11},$$
    and then (combine \eqref{eq: lemma 4.2 eq1},\eqref{eq: lem 4.2 eq2})
    $$\int_{\Sp^5}\vert\Pi_L\mathscr{H}_ux\vert^2\dif\sigma\geqslant\frac{(7d-32)E(u)+(160-28d)(1-\vert\bar u\vert^2)}{77}.$$
    To finish the proof of lemma, we notice that the normalization $\int_{\Sp^5}\vert x_{\alpha}\vert^2\dif\sigma=1/6$ implies
    $$\int_{\Sp^5}\vert\Pi_L\mathscr{H}_ux\vert^2\dif\sigma=\operatorname{tr}(\Pi_L\mathscr{H}_u\mathscr{H}_u^{\text{T}})/6.$$ By taking $L$ to be the subspace of $\R^7$ spanned by the eigenvectors associated to $\lambda_1,\dots,\lambda_d$, together with the lower bound of $\int_{\Sp^5}\vert\Pi_L\mathscr{H}_u\vert^2\dif\sigma$, we see
    \begin{equation}\label{eq: lower bound of the sum of eigenvalues}
        \Lambda_d:=\sum_{j=1}^d\lambda_j\geqslant\frac{6}{77}((7d-32)E(u)+(160-28d)(1-\vert\bar u\vert^2).
    \end{equation}
    Let us set 
    $$\frac{1-\vert\bar u\vert^2}{E(u)}=\frac{1}{6}+\delta\text{ and }\kappa=\frac{E(u)}{77}.$$
    Given \eqref{eq: full energy control, n=5}, we see $0\leqslant\delta\leqslant1/44$. Since $\mathscr{H}_u$ has rank at most $6$, we see that $\Lambda_1=\lambda_1=0$. The lower bound for all other $d$ is now written as
    $$\Lambda_2\geqslant\kappa(-4+624\delta)_+=:\kappa p(\delta);\quad\Lambda_d\geqslant\kappa(14d-32+(960-168d))\delta\text{ for }3\leqslant d\leqslant7.$$
    As such, we see $\Lambda=(\lambda_2,\dots,\lambda_7)$ weak supermajorization $$a(\delta)=\kappa(p(\delta),10+456\delta-p(\delta),14-168\delta,\dots,14-168\delta).$$
    We refer readers to Appendix \ref{ap: weakly supermajorize} for the definition and consequence of weakly supermajorize. One can check that $a(\delta)$ is arranged in an increasing order so that it satisfies the definiton in Appendix \ref{ap: weakly supermajorize}. Since the function $t\mapsto\sqrt{t}$ is concave and increasing, we have
    $$\sum_{j=1}^7\sqrt{\lambda_j}\geqslant\sqrt{\kappa}\left(\sqrt{p(\delta)}+\sqrt{10+456\delta-p(\delta)}+4\sqrt{14-168\delta}\right)=:\sqrt\kappa F(\delta),$$
    where,
    $$F(\delta)=\begin{cases}
        \sqrt{10+456\delta}+4\sqrt{14-168\delta},&\text{ if }0\leqslant\delta\leqslant1/156;\\
        \sqrt{-4+624\delta}+5\sqrt{14-168\delta},&\text{ if }1/156\leqslant\delta\leqslant1/44.
    \end{cases}$$
    Each expression above is a concave function. Hence, to obtain a lower bound of $F$, we only need the endpoint values of $F$. They are $F(0)=\sqrt{10}+4\sqrt{14},F(1/156)=10\sqrt{42/13},F(1/44)=6\sqrt{112/11}$. The minimum number here is $F(1/156)=10\sqrt{42/13}$. Hence,
    $$\left(\sum_{j=1}^7\sqrt{\lambda_j}\right)^2\geqslant\kappa\frac{4200}{13}=\frac{600E(u)}{143}.$$
    \end{proof}
    Next, let us derive an upper bound of $\sum_j\sqrt{\lambda_j}$. Note that $\{\sqrt{\lambda_j}\}$ are singular values of $\mathscr{H}_u$.
    \begin{lem}\label{lem: eigenvalue upper bound}
        We  have the upper bound
        $$\left(\sum_{j=1}^7\sqrt{\lambda_j}\right)^2\leqslant\frac{144}{25}E(u)(6(1-\vert\bar u\vert^2)-E(u)).$$
    \end{lem}
    \begin{proof}
        To prove this lemma, we need to make use of the stability inequality Proposition \ref{prop: stable inequality}. For simplicity, set $H(X,Y)=I_u(X,Y)+4\int_{\Sp^n}\langle X,Y\rangle\dif\sigma$ for $X,Y\in C^{\infty}(\Sp^5,T_u\Sp^6)$. Then, $H(X,X)\geqslant0$. Recall also that Proposition \ref{prop: fundamental space of index} tells us for any unit vector $e\in\R^6$, $\nabla_{\hat e}u$ is an eigensection of $L_u$ corresponding to eigenvalue $-3$.

        Let $\lbrace e_{\alpha}\rbrace_{\alpha=1}^6$ and $\lbrace e_j\rbrace_{j=7}^{13}$ be orthonormal bases of $\R^6$ and $\R^7$ respectively, and consider the vector fields $X_{\alpha}=\nabla_{\hat{e}_{\alpha}}u,1\leqslant\alpha\leqslant6$ and $Y_j=\hat e_j\circ u-(e_j\wedge \bar u)u,7\leqslant j\leqslant 13$. We calculate using eigensection equation of $X_{\alpha}$
        $$H(X_{\alpha},X_{\alpha})=I_u(X_{\alpha},X_{\alpha})+4\int_{\Sp^5}\vert X_\alpha\vert^2\dif\sigma=\int_{\Sp^5}\vert X_{\alpha}\vert^2\dif\sigma,$$
        $$\sum_{\alpha=1}^6H(X_{\alpha},X_{\alpha})=\sum_{\alpha=1}^6\int_{\Sp^5}\vert\nabla_{\hat e_{\alpha}}u\vert^2\dif\sigma= E(u).$$
        Using Proposition \ref{prop: infinitesimal isometry},
        $$\int_{\Sp^5}\langle(e_j\wedge\bar u)u,\nabla_{\hat e_{\alpha}}u\rangle\dif\sigma=-\frac{1}{3}I_u((e_j\wedge \bar u)u,\nabla_{\hat e_{\alpha}}u\rangle=0.$$
        Hence, 
        \begin{align*}
            H(X_{\alpha},Y_j)&=\int_{\Sp^5}\langle\nabla_{\hat e_{\alpha}}u,\hat e_j\circ u\rangle\dif\sigma=\int_{\Sp^5}\langle\nabla_{\hat e_{\alpha}}u,e_j\rangle\dif\sigma\\&=\int_{\Sp^5}\langle\nabla\langle u,e_j\rangle,\nabla x_{\alpha}\rangle\dif\sigma=5\int_{\Sp^5}\langle u,e_j\rangle x_{\alpha}\dif\sigma,
        \end{align*}
        where in the last step we used the fact that $x_{\alpha}$ is the eigenfunction for eigenvalue $5$. By the definition of $\mathscr{H}_u$, the matrix $(H(Y_j,X_{\alpha}))$ equals $5\mathscr{H}_u/6$. Finally, as computed in \eqref{eq: calculation of second variation on conformal vector fields} and \eqref{eq: upper bound of X_j}, we have that
        \begin{align*}
            \sum_{j=1}^7H(Y_j,Y_j)&=\sum_{j=1}^7\left(I_u(\hat e_j\circ u,\hat e_j\circ u)+4\int_{\Sp^5}\vert \hat e_j\circ u-(e_j\wedge\bar u)u\vert^2\dif\sigma\right)\\&\leqslant4(6(1-\vert\bar u\vert^2)-E(u)).
        \end{align*}
        
        Set matrices $\mathscr{X}=(H(X_{\alpha},X_{\beta})),\mathscr{Y}=(H(Y_i,Y_j))$. Since $H$ is a positive semidefinite bilinear form, we have that the following matrix is positive semidefinite:
        $$
        \begin{pmatrix}
            \mathscr{X}&\frac{5}{6}\mathscr{H}^{\text{T}}\\
            \frac{5}{6}\mathscr{H}&\mathscr{Y}
        \end{pmatrix}
        $$
        Hence we may write this matrix as a product $\mathscr{A}^{\text{T}}\mathscr{A}$ for a $13\times 13$ matrix $\mathscr{A}$. We further write $\mathscr{A}=(\mathscr{B},\mathscr{C})$, where $\mathscr{B}$ and $\mathscr{C}$ are $13\times 6$ and $13\times 7$ matrices respectively. As a result, we can write $\mathscr{X}=\mathscr{B}^{\text{T}}\mathscr{B},\mathscr{Y}=\mathscr{C}^{\text{T}}\mathscr{C}$ and $5\mathscr{H}/6=\mathscr{C}^{\text{T}}\mathscr{B}$.
         
        Let $\Vert \cdot\Vert_*$ be the nuclear norm of matrices, which is just the sum of singular values. We deduce that 
        $$\left(\sum_{j=1}^7\frac{5}{6}\sqrt{\lambda_j}\right)^2=\left\Vert\frac{5}{6}\mathscr{H}_u\right\Vert_*^2\leqslant\operatorname{tr}(\mathscr{B}^{\text{T}}\mathscr{B})\operatorname{tr}(\mathscr{C}^{\text{T}}\mathscr{C})=\operatorname{tr}\mathscr{X}\operatorname{tr}\mathscr{Y}\leqslant 4E(u)(6(1-\vert\bar u\vert^2)-E(u)).$$
    \end{proof}
    Now, let us derive a contradiction. Combining Lemma \ref{lem: eigenvalue lower bound} and Lemma \ref{lem: eigenvalue upper bound}, we get
    $$6(1-\vert\bar u\vert^2)-E(u)\geqslant\frac{600}{143}\frac{25}{144}=\frac{625}{858}.$$
    On the other hand, the lower bound in \eqref{eq: full energy control, n=5} means
    $$6(1-\vert\bar u\vert^2)-E(u)\leqslant\frac{3}{22}E(u).$$Hence,
    $$6-(6(1-\vert\bar u\vert^2)-E(u))\geqslant E(u)\geqslant\frac{22}{3}(6(1-\vert\bar u\vert^2)-E(u)).$$
    We deduce that $$6(1-\vert\bar u\vert^2)-E(u)\leqslant\frac{18}{25}.$$
    But $625/858>18/25$, this is a contradiction.
\end{proof}

\section{Index estimates and rigidity results}\label{s: index estimate}

In this section, we derive the index estimate in Theorem \ref{thm: main theorem 3}. 

Recall that Proposition \ref{prop: fundamental space of index} already gives us $n+1$ negative directions of $u$, and the corresponding eigenvalue is $-(n-2)$. To find an extra negative eigenvalue, and hence prove Theorem \ref{thm: main theorem 3}, we only need to show that the first eigenvalue of $L_u$ is less than $-(n-2)$. We establish this by contradiction.

\begin{thm}\label{thm: eq-index estimate}
    Let $u\in C^{\infty}(\Sp^n,\Sp^k)$ be a harmonic map. Suppose $6\leqslant n\leqslant k-1$, and the first eigenvalue of $L_u$ is at least $-(n-2)$. Then $u$ must be a constant.
\end{thm}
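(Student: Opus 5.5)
The plan is to run the argument of Theorem \ref{thm: main theorem 2 part 1} again, with the stability constant $(n-1)^2/4$ replaced by $n-2$. First translate the hypothesis into a stability-type inequality. By the variational characterization of the first eigenvalue of $L_u$, the hypothesis gives
$$I_u(X,X)\geqslant -(n-2)\int_{\Sp^n}\vert X\vert^2\dif\sigma\quad\text{for all }X\in C^\infty(\Sp^n,T_u\Sp^k).$$
The proof of Proposition \ref{prop: control wrt L} used Proposition \ref{prop: stable inequality} only through this type of inequality. So it goes through verbatim and yields, for any $d$-dimensional subspace $L$,
$$3\int_{\Sp^n}\vert\nabla u\vert^2\vert\Pi_L u\vert^2\dif\sigma-dE(u)\geqslant (n-2)\left(\int_{\Sp^n}\vert\Pi_{L}(u-\bar u)\vert^2\dif \sigma-d(1-\vert\bar u\vert^2)\right).$$
Taking $L=\R^{k+1}$ gives $E(u)\leqslant \frac{(n-2)k}{k-2}(1-\vert\bar u\vert^2)$. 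Since $k\geqslant n+1$ and $k\mapsto k/(k-2)$ is decreasing, this is at most $\frac{(n-2)(n+1)}{n-1}(1-\vert\bar u\vert^2)$.

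Next, since $\operatorname{rank}\mathscr{H}_u\leqslant n+1<k+1$, I would choose a line $L\subset(\operatorname{Im}\mathscr{H}_u)^\perp$, i.e. $d=1$. Write $A=1-\vert\bar u\vert^2$ and $P=\int_{\Sp^n}\vert\Pi_L(u-\bar u)\vert^2\dif\sigma$. Exactly as in \eqref{eq: energy lower bound wrt kernel}, the improved Poincar\'e inequality \eqref{eq: better estimate for higher order terms} on $L$ and the ordinary one on $L^\perp$ give $E(u)\geqslant nA+(n+2)P$. Substituting \eqref{eq: upper bound of L-energy} into the $L$-inequality above, as in \eqref{eq: improved upper bound wrt L}, gives
$$0\leqslant 2E(u)+\big((n-2)-3n\big)A+\big(3n-(n-2)\big)P.$$
The coefficient of $P$ is $2n+2>0$ for every $n$. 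So, unlike the situation with $(n-1)^2/4$, no restriction on $n$ is needed at this step, and I can substitute $P\leqslant (E(u)-nA)/(n+2)$. After multiplying by $n+2$, the result should simplify to
$$(4n+6)E(u)\geqslant 4(n+1)^2(1-\vert\bar u\vert^2).$$

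Finally, compare the two bounds. A nonconstant $u$ would need $\frac{4(n+1)^2}{4n+6}\leqslant\frac{(n-2)(n+1)}{n-1}$. This is equivalent to $4n^2-4\leqslant 4n^2-2n-12$, which fails for every $n\geqslant1$. Hence $E(u)\leqslant c\,E(u)$ for some $c<1$, which forces $E(u)=0$, so $u$ is constant.

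The main point to verify carefully is the first step: that the eigenvalue hypothesis legitimately replaces Proposition \ref{prop: stable inequality} for the specific test fields $X_j=\hat e_j\circ u-(e_j\wedge\bar u)u$. These are smooth sections of $T_u\Sp^k$, so the Rayleigh quotient bound applies. The rest is the arithmetic of the elimination, which I expect to be routine. The assumption $n\geqslant6$ appears not to be used beyond ensuring the theorem is new relative to Section \ref{s: part 1}.
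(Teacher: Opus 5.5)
Your proposal is correct and follows the paper's proof essentially line for line. You replace $(n-1)^2/4$ by $n-2$ in Proposition~\ref{prop: control wrt L}, take $L=\R^{k+1}$ to get $E(u)\leqslant\frac{(n-2)k}{k-2}(1-\vert\bar u\vert^2)$, take a line $L\subset(\operatorname{Im}\mathscr{H}_u)^{\perp}$ to get $E(u)\geqslant\frac{2(n+1)^2}{2n+3}(1-\vert\bar u\vert^2)$, and compare the two bounds. Your explicit elimination checks out, and you are right that the positive coefficient $2n+2$ of $P$ means the hypothesis $n\geqslant 6$ is never actually used.
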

\begin{proof}
    Let $L$ be a $d$-dimensional subspace of $\R^{k+1}$. Similar to Proposition \ref{prop: control wrt L}, we have that
    $$3\int_{\Sp^n}\vert\nabla u\vert^2\vert\Pi_Lu\vert^2\dif\sigma-dE(u)\geqslant(n-2)\left(\int_{\Sp^n}\vert\Pi_L(u-\bar u)\vert^2\dif\sigma-d(1-\vert\bar u\vert^2\right).$$
    In particular,
    $$E(u)\leqslant\frac{(n-2)k}{k-2}(1-\vert\bar u\vert^2).$$
    On the other hand, take $L$ to be a 1-dimensional subspace of $(\operatorname{Im}\mathscr{H}_u)^{\perp}$. Similar to the derivation of \eqref{eq: canceling L-energy}, we have
    $$\frac{2(n+1)^2}{2n+3}(1-\vert\bar u\vert^2)\leqslant E(u).$$
    
    Now, in the case where $k\geqslant n+1,$
    $$\frac{(n-2)k}{k-2}-\frac{2(n+1)^2}{2n+3}\leqslant\frac{(n-2)(n+1)}{n-1}-\frac{2(n+1)^2}{2n+3}=-\frac{(n+4)(n+1)}{(2n+3)(n-1)}<0.$$
    We must have $E(u)=0$ as stated.
\end{proof}

\appendix
\section{Weak supermajorization}\label{ap: weakly supermajorize}
We refer readers to \cite{MarshallOlkinArnold} for a detailed discussion for this concept.
\begin{defn}
    Given two vectors $a=(a_1,\dots,a_n)$ and $b=(b_1,\dots,b_n)$ arranged so that $a_i\leqslant a_{i+1}$ and $b_i\leqslant b_{i+1}$ for each $i\in\lbrace1,\dots,n\rbrace$. We say $a$ \textbf{weakly supermajorize} $b$ if
    $$\sum_{i=1}^lb_i\geqslant\sum_{i=1}^la_i,\text{ for each }l\in\lbrace1,\dots,n\rbrace.$$
\end{defn}
The property regrading weakly supermajorization used in the proof of Lemma \ref{lem: eigenvalue lower bound} is the following.
\begin{prop}[{\cite[4.B.2]{MarshallOlkinArnold}}]
    Given two vectors $a=(a_1,\dots,a_n)$ and $b=(b_1,\dots,b_n)$ arranged in an increasing order. Then $b$ weakly supermajorize $a$ if and only if for each increasing concave function $\phi$, there holds
    $$\sum_{i=1}^n\phi(b_i)\geqslant\sum_{i=1}^n\phi(a_i).$$
\end{prop}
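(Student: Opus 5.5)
Throughout, I read the inequality as it is used in the proof of Lemma \ref{lem: eigenvalue lower bound}. There, $\Lambda=(\lambda_2,\dots,\lambda_7)$ has partial sums of its smallest entries bounded below by those of $a(\delta)$, and the desired conclusion is $\sum\sqrt{\lambda_j}\geqslant\sum\sqrt{a_j(\delta)}$. So the statement to prove is the following. For increasingly ordered $a,b$,
$$\sum_{i=1}^l b_i\geqslant\sum_{i=1}^l a_i\ \text{ for all } l$$
holds if and only if $\sum_i\phi(b_i)\geqslant\sum_i\phi(a_i)$ for every increasing concave $\phi$. The plan is a direct Abel-summation argument for one direction and a family of explicit test functions for the converse.

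\emph{Forward direction.} Fix an increasing concave $\phi$ and define slopes $c_i$ as follows:
$$c_i=\frac{\phi(b_i)-\phi(a_i)}{b_i-a_i}\ \text{ if } a_i\ne b_i,$$
and $c_i$ equal to a one-sided derivative of $\phi$ at $a_i$ if $a_i=b_i$ (when $a_i$ is the left endpoint of the domain, for instance $\phi=\sqrt{\cdot}$ at $0$, I would first perturb or use the right derivative, and pass to a limit if needed). With this choice, $\phi(b_i)-\phi(a_i)=c_i(b_i-a_i)$ exactly.

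The key facts are these. First, $c_i\geqslant0$ because $\phi$ is increasing. Second, $c_i\geqslant c_{i+1}$, because the chord slope of a concave function is nonincreasing in each endpoint, and both $a_i\leqslant a_{i+1}$ and $b_i\leqslant b_{i+1}$.

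Now set $D_l=\sum_{i\leqslant l}(b_i-a_i)\geqslant0$ and $D_0=0$. Summation by parts gives
$$\sum_{i=1}^n\bigl(\phi(b_i)-\phi(a_i)\bigr)=\sum_{i=1}^n c_i(D_i-D_{i-1})=\sum_{l=1}^{n-1}(c_l-c_{l+1})D_l+c_nD_n\geqslant0.$$
The only delicate point, and the one I expect to need the most care, is the chord-slope monotonicity when some $a_i=b_i$ or when the points straddle each other. I would handle it by the standard three-slope inequality for concave functions, choosing the one-sided derivative consistently: the right derivative at a degenerate pair still sits between the neighbouring chord slopes.

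\emph{Converse.} Fix $l$ and apply the hypothesis to the increasing concave function $\phi(t)=\min(t-a_l,0)$.

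On the $a$ side, since $a$ is increasing, $\phi(a_i)=a_i-a_l$ for $i\leqslant l$ and $\phi(a_i)=0$ for $i>l$. Hence $\sum_i\phi(a_i)=\sum_{i\leqslant l}(a_i-a_l)$.

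On the $b$ side, every term satisfies $\phi(b_i)\leqslant0$, and for $i\leqslant l$ also $\phi(b_i)\leqslant b_i-a_l$. Hence $\sum_i\phi(b_i)\leqslant\sum_{i\leqslant l}(b_i-a_l)$.

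Combining these with $\sum_i\phi(b_i)\geqslant\sum_i\phi(a_i)$ gives $\sum_{i\leqslant l}b_i\geqslant\sum_{i\leqslant l}a_i$, as required.
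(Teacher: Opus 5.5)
Your proof is correct. It does more than the paper, which gives no argument and only cites \cite[4.B.2]{MarshallOlkinArnold}. Your sufficiency direction is the classical Tomi\'c--Weyl Abel-summation argument: nonincreasing nonnegative chord slopes $c_i$ paired against nonnegative partial sums $D_l$. Your necessity direction uses the hinge functions $\min(t-a_l,0)=-(a_l-t)_+$, which are the standard test functions for weak majorization. Relative to the citation, your route buys self-containedness at the cost of a few lines.

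On the step you flagged as delicate: setting $s(x,x):=\phi'_+(x)$ does keep the extended slope function nonincreasing in each variable. Chord slopes to the left of $x$ are $\geqslant\phi'_-(x)\geqslant\phi'_+(x)$, and chord slopes to the right are $\leqslant\phi'_+(x)$, so $c_i\geqslant c_{i+1}$ holds even with degenerate or straddling pairs.

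Your opening reinterpretation deserves to be stated explicitly rather than silently. Read against the appendix's Definition, ``$b$ weakly supermajorizes $a$'' literally means $\sum_{i\leqslant l}a_i\geqslant\sum_{i\leqslant l}b_i$ for all $l$. Under that reading the stated equivalence is false: take $n=1$, $a=(1)$, $b=(0)$, $\phi(t)=t$. The version you prove, in which the partial sums of $b$ dominate those of $a$, is the one actually used in Lemma~\ref{lem: eigenvalue lower bound}. So your reading is the correct repair.

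Two small technical points are worth tidying.
\begin{enumerate}
\item You need no perturbation or limit for a degenerate pair $a_i=b_i=m$ at the left endpoint of the domain (where, e.g., the right derivative of $\sqrt{\cdot}$ at $0$ is $+\infty$). Monotonicity forces $a_j=b_j=m$ for all $j\leqslant i$, and $D_i=0$. These indices contribute nothing and can be discarded, and the tail still satisfies the partial-sum hypothesis. After discarding, every slope your argument needs is finite.
\item If ``increasing'' is meant strictly, use $\min(t-a_l,0)+\varepsilon t$ in the converse and let $\varepsilon\to0$.
\end{enumerate}
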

\bibliography{reference}
\bibliographystyle{plain}
\end{document}